\documentclass[11pt]{article}
\usepackage[T1]{fontenc}
\usepackage{graphicx, subcaption}
\usepackage[pdftex,dvipsnames,table]{xcolor}
\usepackage{amsmath, amsthm, amssymb}
\usepackage{bbold}
\usepackage{booktabs, tabularx, multirow}
\usepackage[nocomma, c3]{optidef}
\usepackage{cleveref}
\usepackage{todonotes}
\usepackage{xargs}
\usepackage[margin=1in]{geometry}
\usepackage{tcolorbox}
\usepackage{xspace}
\usepackage{float}
\usepackage{url}
\usepackage{tikz, pgfplots}
\usetikzlibrary{shapes, arrows}
\usepackage{siunitx}
\usepackage{enumitem}
\usepackage{authblk}

\usepackage{setspace}
\usepackage[round]{natbib}
\usepackage{endnotes}
\let\footnote=\endnote

\title{Solving the Line-Based Dial-a-Ride Problem\\by Generating Stopping Patterns}
\author[1]{Antonio Lauerbach}
\author[2,3]{Sven Mallach}
\author[1]{Kendra Reiter}
\author[1]{Marie Schmidt}
\author[4]{Michael Stiglmayr}
\affil[1]{Department of Computer Science, University of Würzburg, Germany, \texttt{\{firstname.lastname\}@uni-wuerzburg.de}}
\affil[2]{Network \& Data Science Management Group, University of Siegen, Germany}
\affil[3]{University of Bonn, Germany, \texttt{sven.mallach@cs.uni-bonn.de}}
\affil[4]{School of Mathematics and Natural Sciences, University of Wuppertal, Germany, \texttt{stiglmayr@uni-wuppertal.de}}
\date{\vspace*{-2em}}

\newcommand{\dir}{\mathrm{dir}}
\newcommand{\asc}{\mathrm{asc}}
\newcommand{\desc}{\mathrm{desc}}

\newcommand{\Sset}{\mathcal{S}}
\newcommand{\Rset}{\mathcal{R}}
\newcommand{\Pset}{\mathcal{P}}
\newcommand{\Hset}{\mathcal{H}}
\newcommand{\Kset}{\mathcal{K}}
\newcommand{\Jset}{\mathcal{J}}
\newcommand{\Gset}{\mathcal{G}}

\newcommand{\Vset}{\mathcal{V}}
\newcommand{\Iset}{\mathcal{I}}

\newcommand{\Rasc}{\Rset^{\asc}}
\newcommand{\Rdesc}{\Rset^{\desc}}
\newcommand{\Pasc}{\Pset^{\asc}}
\newcommand{\Pdesc}{\Pset^{\desc}}
\newcommand{\Gasc}{\Gamma^{\asc}}
\newcommand{\Gdesc}{\Gamma^{\desc}}
\newcommand{\Gdir}{\Gamma^{\dir}}
\newcommand{\Rdir}{\Rset^{\dir}}
\newcommand{\Pdir}{\Pset^{\dir}}
\newcommand{\dirs}{\dir \in \{\asc, \desc\}}
\newcommand{\Qmax}{Q}
\newcommand{\Sstart}[2]{\mathrm{START}^{#1}_{#2}}
\newcommand{\Send}[2]{\mathrm{END}^{#1}_{#2}}
\newcommand{\Right}[2]{\mathrm{R}^{#1}_{#2}}
\newcommand{\Left}[2]{\mathrm{L}^{#1}_{#2}}

\newcommand{\sizeS}{u}
\newcommand{\sizeR}{m}
\newcommand{\sizeH}{n}
\newcommand{\sizeK}{c}
\newcommand{\sizeP}{q}
\newcommand{\wpax}{w_{\text{pax}}}
\newcommand{\wdist}{w_{\text{dist}}}

\newcommand{\NP}{\ensuremath{\mathsf{NP}}\xspace}

\newcommand{\Pclique}{\textsc{clique}\xspace}
\newcommand{\Plidarp}{\textsc{liDARP}\xspace}
\newcommand{\Plidarplong}{\textsc{line-based dial-a-ride problem}\xspace}

\newcommand{\Ppattern}{\textsc{most profitable stopping pattern}\xspace}
\newcommand{\Ppatternuncap}{\textsc{most profitable uncapacitated stopping pattern}\xspace}
\newcommand{\lipdp}{\textsc{liDARP without TWs}\xspace}
\newcommand{\darp}{\textsc{DARP}\xspace}
\newcommand{\pdp}{\textsc{PDP}\xspace}
\newcommand{\vrp}{\textsc{VRP}\xspace}

\newcommand{\defproblem}[6]{
  \begin{tcolorbox}[colback=#5!5!white,colframe=#5!75!black]%
    \hspace{0ex}\hspace*{-2.8ex}
    \begin{minipage}{0.99\textwidth}
      \vspace{0ex}\vspace*{-1ex}
      {\sf\bfseries\color{#5!75!black} #6 Problem:} #1\\[.1ex]
      \begin{tabular}{@{}l@{~~}p{0.9\textwidth}@{}}
        {\sf\bfseries\color{#5!75!black} Input:} & #2\\[.1ex]
        {\sf\bfseries\color{#5!75!black} #4:} & #3\\[-1ex]
      \end{tabular}
    \end{minipage}
  \end{tcolorbox}
}
\newcommand{\defdecproblem}[3]{\defproblem{#1}{#2}{#3}{Question}{gray}{Decision}}
\newcommand{\defoptproblem}[3]{\defproblem{#1}{#2}{#3}{Output}{gray}{Optimization}}

\newtheorem{thm}{Theorem}

\usepackage{mathtools}
\DeclarePairedDelimiter\set{\{}{\}} 

\def\Oh{\ensuremath{\mathcal{O}}} 

\definecolor{PKdarkblue}{rgb}{0.121,0.47,0.705}
\definecolor{PKdarkred}{rgb}{0.89 0.102 0.109}
\definecolor{PKdarkgreen}{rgb}{0.2 0.627 0.172}
\definecolor{PKdarkorange}{rgb}{1 0.498 0}
\definecolor{PKdarkpurple}{rgb}{0.415 0.239 0.603}
\definecolor{PKdarkpink}{rgb}{0.969 0.506 0.749}
\definecolor{PKdarkyellow}{rgb}{1 1 0.2}
\definecolor{PKlightgray}{rgb}{0.8 0.8 0.8}
\definecolor{PKdarkgray}{rgb}{0.5 0.5 0.5}
\definecolor{PKlightblue}{rgb}{0.651 0.807 0.89}
\definecolor{PKlightgreen}{rgb}{0.698 0.874 0.541}
\definecolor{PKlightorange}{rgb}{0.992 0.749 0.435}

\usepgfplotslibrary{statistics}
\pgfplotsset{compat=1.18,small,scale only axis,enlarge x limits={value=0.03,auto},
layers/marklayers/.define layer set={
    bg,main,m1,m2,m3,m4
}{},set layers=marklayers}
\pgfplotscreateplotcyclelist{scatter-marks-2-plots-light-dark}{
  {solid,mark options={scale=.95,fill=PKlightblue,line width=1pt},mark=square*,PKlightblue},
  {solid,mark options={scale=.95,fill=PKdarkblue,line width=0.7pt},mark=Mercedes star,PKdarkblue},
  {solid,mark options={scale=.95,fill=PKlightorange,line width=1pt},mark=square,PKlightorange},
  {solid,mark options={scale=.95,fill=PKdarkorange,line width=0.7pt},mark=Mercedes star flipped,PKdarkorange}
}
\pgfplotscreateplotcyclelist{scatter-marks-4-plots}{
  {solid,mark options={scale=.95,fill=PKdarkgreen,line width=0.7pt},mark=o,PKdarkgreen},
  {solid,mark options={scale=.95,fill=PKdarkred,line width=0.7pt},mark=square,PKdarkred},
  {solid,mark options={scale=.95,fill=PKdarkblue,line width=0.7pt},mark=diamond,PKdarkblue},
  {solid,mark options={scale=.95,fill=PKdarkorange,line width=0.7pt},mark=triangle,PKdarkorange}
}
\pgfplotscreateplotcyclelist{scatter-marks-6-plots}{
  {solid,mark options={scale=.95,fill=PKdarkgreen,line width=0.7pt},mark=o,PKdarkgreen},
  {solid,mark options={scale=.95,fill=PKdarkred,line width=0.7pt},mark=square,PKdarkred},
  {solid,mark options={scale=.95,fill=PKdarkblue,line width=0.7pt},mark=diamond,PKdarkblue},
  {solid,mark options={scale=.95,fill=PKdarkorange,line width=0.7pt},mark=triangle,PKdarkorange},
  {solid,mark options={scale=.95,fill=PKdarkpurple,line width=0.7pt},mark=pentagon,PKdarkpurple},
  {solid,mark options={scale=.95,fill=PKdarkpink,line width=0.7pt},mark=star,PKdarkpink}
}
\pgfplotscreateplotcyclelist{scatter-marks-9-plots}{
  {solid,mark options={scale=.95,fill=PKdarkgreen,line width=0.7pt},mark=o,PKdarkgreen},
  {solid,mark options={scale=.95,fill=PKdarkred,line width=0.7pt},mark=square,PKdarkred},
  {solid,mark options={scale=.95,fill=PKdarkblue,line width=0.7pt},mark=diamond,PKdarkblue},
  {solid,mark options={scale=.95,fill=PKdarkorange,line width=0.7pt},mark=triangle,PKdarkorange},
  {solid,mark options={scale=.95,fill=PKdarkpurple,line width=0.7pt},mark=pentagon,PKdarkpurple},
  {dashed,mark options={scale=.95,fill=PKdarkgreen,line width=1pt},mark=*,PKdarkgreen},
  {dashed,mark options={scale=.95,fill=PKdarkred,line width=1pt},mark=square*,PKdarkred},
  {dashed,mark options={scale=.95,fill=PKdarkblue,line width=1pt},mark=diamond*,PKdarkblue},
  {dashed,mark options={scale=.95,fill=PKdarkorange,line width=1pt},mark=triangle*,PKdarkorange},
}
\pgfplotscreateplotcyclelist{boxplot-colors}{
    {solid,PKdarkgreen}, 
    {solid,PKdarkred}, 
    {solid,PKdarkblue}, 
    {solid,PKdarkorange}
}

\begin{document}
\renewcommand{\arraystretch}{1.2}

\maketitle

\begin{abstract}
    In the \Plidarplong (\Plidarp), vehicles operate along a predefined bus line, with the possibility of skipping stations and turning when empty. 
    Motivated by the practical observation that tight passenger time windows often limit pooling in on-demand services, we introduce a new variant of this transportation system by removing all temporal constraints, which we call the \lipdp.

    We introduce a new MILP formulation for the \lipdp, which constructs feasible tours as sequences of stopping patterns; first, we consider a fundamental single-vehicle, single-pass special case. Based on our insights, we develop a branch-and-price algorithm where the pricing problem generates profitable stopping patterns.
    For practical applications, we additionally propose a root node heuristic, using the stopping patterns generated at the root node.

    Computational experiments show that our branch-and-price algorithm is competitive, finding solutions with a MIP gap of less than 5\% for large instances in 60 minutes. Further, the root node heuristic scales to instances with up to 100 requests, outperforming the state-of-the-art and reaching optimality gaps of less than 5\% within 15 minutes.
    This method is highly effective in generating solutions for practical applications, where solving large problems quickly is more valuable than reaching optimality.
\end{abstract}

\textbf{Keywords:} line-based dial-a-ride problem, dial-a-ride problem, branch-and-price, column generation, transportation, vehicle routing

\section{Introduction}\label{sec:intro}

Public transportation systems can be categorized based on their flexibility \citep[see][]{errico_survey_2013}: classical scheduled systems are \emph{fixed} in both spatial and temporal dimensions, with regular tours and timetables (or headways). In contrast, on-demand systems are \emph{fully-flexible}, where vehicles may take any path between customers at any time. The optimization problem associated with finding the best tour (most commonly: minimizing driving costs) for a fleet of vehicles serving a specific set of requests in the fully-flexible setting is called the \textsc{dial-a-ride problem} (\darp). Transportation paradigms which combine characteristics of both the traditional public transport and on-demand systems are called \emph{semi-flexible}.

One such semi-flexible system is known in German as \textsc{Bedarfslinienbetrieb} \citep{mehlert1998angebotsbezeichnungen} (literally: demand-responsive line-based operation), for which, to the best of our knowledge, no exact English equivalent exists in the traffic engineering literature. This system adds spatial flexibility to an existing bus line by allowing that a vehicle's tour may visit only a subset of stations based on the pre-booked passenger requests. In particular, the vehicle might use shortcuts or turn before the final stop of the line. The systems employ a conventional timetable, which,
according to \cite{siefer_handbuch_2023} may lead to higher pooling rates compared to more temporally flexible systems. To the best of our knowledge, this increase in pooling rates has not been empirically confirmed. Examples include the Rufbus Eiderstedt\footnote{\url{https://www.nordfriesland.de/Wirtschaft-Tourismus/OPNV/Rufbus-Eiderstedt/}}, the StadtBus Melsungen\footnote{\url{https://www.melsungen.de/stadtbuslinie-und-anrufsammeltaxi/}}, and the AnrufbusFlex\footnote{\url{https://www.mein-bus.net/Linienverkehr/Landkreis-Wittenberg/Anrufbus/index.html}}.

We consider a variation of this transportation system in which we further introduce temporal flexibility by removing the timetable, whilst preserving the spatial structure (and flexibility) of the underlying bus line. Vehicles may skip stations, 
but may only turn around when they are empty, to ensure that every request is always transported in their original direction of travel (with respect to the bus line). This variation has, to the best of our knowledge, not yet been implemented or studied in practice. We call the optimization problem of finding optimal tours in this setting the \Plidarplong (\Plidarp), as a variation of the \darp. The \Plidarp was first described by \cite{reiter_line-based_2024}.

Similar to the assumptions made in the \darp, in the \Plidarp, we determine which requests to serve when and by which vehicle, or to decline a request, while strictly adhering to capacity restrictions. In this aspect, the \Plidarp modeling differs significantly from modeling traditional public transport, where passengers are often assumed to board on a first-come, first-served basis, and capacity restrictions are generally not enforced.

In the \darp, each request is associated with a time window (whose length corresponds to the maximum waiting time) and a maximum ride time, which must be respected by a feasible solution. Additionally, each vehicle must adhere to a maximum duration of service for its operation. 
In the literature on the \darp, a maximum waiting time of 15 minutes has been established by the benchmark instances of~\cite{cordeau_branch-and-cut_2006} and~\cite{ropke_models_2007}, which was adopted for the \Plidarp in~\cite{reiter_line-based_2024}. 
In reality, the waiting time of \darp systems varies widely, reaching mean values of 30 minutes in Germany~\citep{vdv_linien-_2025}. Furthermore, allowing for a longer maximum waiting time during times of disproportionately high demand may reduce the time commitment of the vehicle fleet, increasing flexibility~\citep{ siefer_handbuch_2023,agora_verkehrswende_mobilitatsoffensive_2023}.
Existing state-of-the-art solution methods for the \darp and the \Plidarp exploit the tight time windows in their pre-processing to reduce the number of compatible passenger requests.  
In particular,~\cite{gaul25tight} observe that the solution time for their event-based formulation for the \darp increases significantly with the length of the time window. This motives the development of new methodological approaches.

In this paper, we consider a variant of the \Plidarp by removing the passenger time windows entirely, which we call the \lipdp. We first study the \Ppattern problem, a special case where a single vehicle travels along the line once. We show that the \Ppattern problem is already \NP-hard and propose an integer linear programming (ILP) formulation, by interpreting it as a specific path-finding problem in an acyclic tournament digraph. 
To solve the general \lipdp, we propose a new mixed-integer linear programming (MILP) formulation that builds vehicle tours as a sequence of so-called stopping patterns. 
Since the number of potential stopping patterns is exponential in the number of stations, we develop a branch-and-price algorithm, i.e., we iteratively generate promising stopping patterns by solving a variant of the \Ppattern problem. 
While our branch-and-price algorithm is guaranteed to find optimal solution eventually, it may be too slow for practical purposes. Therefore, as a heuristic alternative, we propose to solve the restricted master problem based on the columns generated in the root node.

The remainder of the paper is structured as follows: we start by discussing related work (\Cref{{sec:intro:lit}}) and give a formal definition of the \lipdp (\Cref{sec:intro:prob}). In \Cref{sec:model}, we discuss the mathematical modeling details and present an MILP model. We introduce the related \Ppattern problem (\Cref{sec:gen-patterns}), show that it is \NP-hard (\Cref{sec:gen-patterns:complexity}), and present solution methods (\Cref{sec:gen-patterns:solving}). The branch-and-price algorithm is presented in detail in \Cref{sec:bap}. Finally, in \Cref{sec:experiments}, computational results are given, where we compare to the state-of-the-art method and investigate our proposed method's performance under various settings.

\subsection{Related Work}\label{sec:intro:lit}
The \Plidarp is first introduced by~\cite{reiter_line-based_2024}, wherein the authors propose and compare three MILP formulations for the \Plidarp: (i) the \emph{location-based} formulation, modifying the three-index formulation for the \darp by~\cite{cordeau_branch-and-cut_2006}, (ii) the \emph{event-based} formulation, built upon the event-based graph introduced by~\cite{gaul_solving_2021}, and (iii) a \emph{subline-based} formulation that models sublines as flows in a network (and, in this, crucially differs from the formulation proposed here). 
In the \Plidarp, the objective is to maximize the weighted sum of accepted requests and the \emph{saved distance}, i.e., the difference between total driven distance and the booked request distance.
\cite{barth_line-based_2025} propose and study
a variant of the \Plidarp, which allows for transfers between multiple lines, called the \textsc{line-based dial-a-ride problem with transfers}.

Research by~\cite{lauerbach_complexity_2025} investigates the complexity of the \Plidarp and the related \textsc{MinTurn} problem. The authors show that, if the number of transported requests is to be maximized (neglecting the driving distance entirely), the \lipdp is solvable in polynomial time. The proof relies on the idea of decomposing the vehicle's tour into sublines, defining a sequence of stops between turns. Other problem variants are shown to be \NP-hard. 

While research on the \Plidarp is still limited, a wide range of related optimization problems has been studied, which the following overview attempts to capture.

First, the \Plidarp can be understood as one variant within the broad class of \textsc{vehicle routing problems} (\vrp). 
\vrp that include precedence (a good or passenger must be picked up before they are dropped off) and pairing (both locations must be visited by the same vehicle) constraints are often referred to as \textsc{pickup and delivery problems} (\pdp), where goods are concerned, or \textsc{dial-a-ride problems} (\darp), where passengers are concerned. 
An overview and classification of variants of the \pdp can be found in~\cite{berbeglia_static_2007} and~\cite{parragh_survey_2008}. For the \darp, we refer to~\cite{cordeau_dial--ride_2007} for an overview up to 2007, and to~\cite{molenbruch_typology_2017} or~\cite{ho_survey_2018} until 2018. 
Most state-of-the art solution approaches~\citep[compare][]{cordeau_branch-and-cut_2006,gaul_event-based_2022} exploit the existence of (ideally, tight) time windows, while there are few exact approaches to solve the \pdp or \darp without time windows: \cite{ruland_pickup_1997} provide an integer formulation of the \pdp with a single vehicle and propose a branch-and-cut algorithm to solve instances of up to 15 requests. \cite{kalantari_algorithm_1985} modify a branch-and-bound algorithm to solve the \pdp with both single and multiple vehicles, with and without capacities. 

\cite{lysgaard_pyramidal_2010} introduce a restricted version of the \vrp, called the \textsc{pyramidal capacitated vehicle routing problem} (\textsc{PCVRP}), and propose a branch-and-cut-and-price algorithm. In the \textsc{PCVRP}, each feasible solution is \emph{pyramidal}: a sequence $(0, i_1, \ldots, i_\alpha, j_\beta, \ldots, j_1, 0)$ starting and ending at the depot $0$, visiting $\alpha + \beta$ request locations at nodes $i_1, \ldots, i_\alpha, j_1, \ldots, j_\beta$ with $i_1 < \ldots < i_\alpha$ and $j_1 < \ldots < j_\beta$. This problem has a similar structure to the \Plidarp, where the order of visiting customers is imposed by the underlying line, though the \textsc{PCVRP} has no precedence or pairing constraints and only one turn in its tour. 

If, in the single vehicle case, nodes are permitted to be both the origin and destination of possibly several pickup and delivery requests, one obtains the \textsc{asymmetric traveling salesman problem with precedence constraints}~\citep{Ascheuer2000}. Its path-variant is known as the \textsc{sequential ordering problem}; both problems have also been studied in the presence of capacities~\citep{HERNANDEZPEREZ2009,LETCHFORDSG2016}.

In the \textsc{target visitation problem}~\citep{Hildenbrandt2019, mallach_refined_2025}, instead of being given a fixed line, the task is to determine an optimal ordering of stations to maximize the difference between a sum of pair-based profits and the total driving distance, in the setting of one vehicle without capacity restrictions.
For our pricing problem presented later on, we will also be confronted with the task of determining a source-to-sink path maximizing the difference between the sum of rewards obtained from realized pickup and delivery requests and the total driving cost, without the necessity to visit all the nodes. As mentioned in the introduction, we refer to this problem as \Ppattern.
A related problem is the \textsc{maximum required pairs with single path problem} studied and shown to be \NP-hard by~\cite{MaxRPSP2014}. This problem, however, does not consider the cost part and acts on a directed acyclic graph that is not necessarily complete, while our subproblem is settled on an acyclic tournament digraph.

The \textsc{capacitated profitable tour problem} (\textsc{CPTP}) introduced by~\cite{archetti_capacitated_2009}, a variation of the well-studied \textsc{team orienteering problem}, considers a similar objective function as the \Plidarp, where the difference between the total profit (awarded per accepted request) and the routing costs is maximized. 
Exact solution methods based on branch-and-price to the \textsc{CPTP} are discussed by~\cite{archetti_optimal_2013}, where the pricing problem is an \textsc{elementary shortest path problem with resource constraints}. 
In this problem, the vehicles are fully flexible and may take any path between nodes, without precedence or pairing constraints.

We emphasize that the \Plidarp and \lipdp differ from the here-mentioned problems in the spatial rigidity through the underlying line structure to be adhered to.

In planning fixed public transportation systems, the determination of lines (\textsc{line planning}), the timing of these (\textsc{timetabling}), and the assignment of vehicles (\textsc{rolling stock scheduling}), which are all intrinsic decisions in the \Plidarp, are traditionally treated as three separate optimization problems and solved sequentially. This is due to the differing time horizons at which each step must be taken, and the high complexity of each step. The use of column generation to solve these problems has been investigated, e.g., for the \textsc{line planning} problem~\citep{borndorfer_column-generation_2007, gatt_solving_2025} or the \textsc{timetabling} problem~\citep{cacchiani_column_2008}. Further research has been done on integrated systems, combining multiple planning steps, e.g., by~\cite{schobel_eigenmodel_2017} and~\cite{liu_optimizing_2023}.

Demand in fixed public transportation systems is ordinarily considered as aggregated origin-destination pairs and is not known explicitly. In such a setting, the usage of express lines, which have optional and fixed stops and operate alternatively to the entire line has been considered by~\cite{roth_energy-efficient_2025}. \cite{gkiotsalitis_subline_2022} use a pre-determined candidate set of express lines, named sublines, to investigate the optimal operational frequency under uncertain demand.

\subsection{Problem Definition}\label{sec:intro:prob}
In this section, we formally introduce the \lipdp. Note that, depending on the precise application, it may be reasonable to assume that all distances satisfy the triangle inequality. 
In this light, we would like to emphasize that our complexity results do extend to the case of metric distances. However, our proposed solution methods do not require the distances to be metric, whence this property is not imposed as a general assumption in our problem definition.

We consider a sequence of stations $\Hset := (1, \ldots, \sizeH)$ with pairwise, symmetric distances $t_{h, h'} \ge 0$ for $h, h' \in \Hset$. The sequence \(\Hset\) defines the underlying \emph{line} of our topology. 
Demand is given as a set of requests $\Rset$, where each request $r \in \Rset$ specifies an origin $o_r \in \Hset$ and a destination $d_r \in \Hset$. We assume that each request consists of one passenger.
To serve the requests, we have 
a fleet of $\sizeK$ homogeneous vehicles $\Kset$ that can carry up to capacity $\Qmax$ passengers. 
The vehicle tours are not restricted to start and end at a depot; instead, vehicles may begin and terminate their tours at any station in $\Hset$. The vehicles may skip stations that do not constitute an origin or destination of any assigned request, but they are constrained to the line structure through the so-called \emph{directionality property} introduced in~\cite{reiter_line-based_2024}: vehicles may only turn around when empty, i.e., when there are no passengers onboard.

A solution to the \lipdp consists of two parts: firstly, a \emph{tour} for each vehicle, denoting  the sequence of stations where the vehicle stops. Secondly, for each request, either the assigned vehicle and index in the tour at which the request enters the vehicle, or the decision to reject this request. This request assignment needs to respect the vehicle capacity, requiring that the number of passengers on board a vehicle does not exceed $\Qmax$ at any point in its tour. A tour may be empty, indicating that the vehicle is not used in the solution. 
An example instance is shown in \Cref{fig:example-instance}: the blue and green requests want to travel in the same direction, opposite to the orange request. In the solution shown in \Cref{fig:example-solution}, the blue and green requests are pooled together, followed by the orange request.

\begin{figure}[htb!]
    \begin{subfigure}{0.4\linewidth}
        \centering
        \includegraphics[width=\linewidth,page=1]{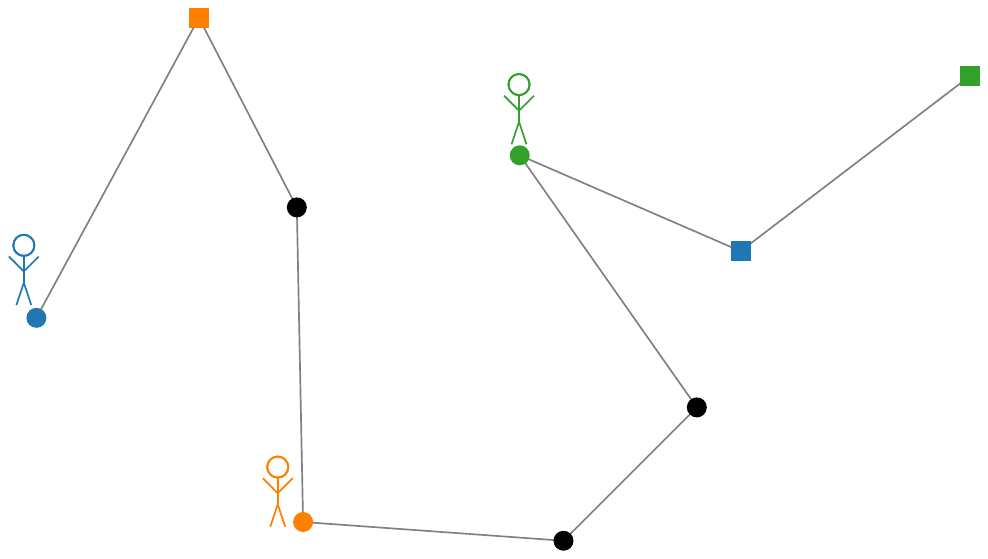}
        \caption{Line with three requests whose destinations are denoted by the squares.}
        \label{fig:example-instance}
    \end{subfigure}
    \hfill
    \begin{subfigure}{0.4\linewidth}
        \centering
        \includegraphics[width=\linewidth,page=2]{figures/00_other/example-line-requests.pdf}
        \caption{Possible solution serving all three requests. The original line is dashed.}
        \label{fig:example-solution}
    \end{subfigure}
    \caption{Example instance and solution for the \lipdp.}
    \label{fig:example-instance-and-solution}
\end{figure}

The objective function maximizes the weighted sum of the number of accepted requests and the \emph{saved distance}: the difference of the total distance traveled (by all vehicles) and direct distance between origin and destination of all accepted requests. This addresses customer satisfaction, transporting as many requests as possible whilst considering environmental concerns, minimizing the distance traveled compared to individual motorized transport.

The \lipdp can be summarized as follows:
\defoptproblem{\lipdp}
{An instance $\Iset$ with stations $\Hset$ with pairwise distances $t$, requests $\Rset$, and vehicles~$\Kset$ of capacity $\Qmax$, objective weights $\wpax$ and $\wdist$}
{A tour for each vehicle, a set of accepted requests, and an assignment of (accepted) requests to a vehicle and index in the vehicle tour that respects capacity constraints and maximizes the objective function with profits $\wpax$ per transported request and $\wdist$ per unit of saved distance.}

In this paper, we study the \emph{static} setting of the \lipdp, where all requests are known ahead of time --- in contrast to \emph{dynamic} settings, where requests appear and have to be inserted during operation.

Considering the complexity of the \lipdp, we state the following result:
\begin{thm}\label{thm:complexity:lipdp}
    The decision version of the \lipdp is \NP-complete even in the case where there is only one vehicle of capacity one and when the distances are Euclidean.
\end{thm}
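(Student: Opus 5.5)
We show membership in \NP and then give a reduction from \textsc{Hamiltonian path} in grid graphs, which is \NP-complete (Itai, Papadimitriou and Szwarcfiter). This is the same source problem used for Euclidean TSP hardness, so the Euclidean setting comes almost for free.

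\emph{Membership.} With capacity one, a tour only needs to stop at origins and destinations of accepted requests. An optimal tour therefore has at most $2|\Rset|$ stops, so a certificate has polynomial size. Given a certificate, we check directionality, capacity and the objective value in polynomial time. This assumes the usual convention for Euclidean distances, namely comparing sums of square roots against a rational threshold, as in Euclidean TSP. If needed, we can sidestep this by choosing the threshold with enough slack, as below, and comparing against a rational approximation.

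\emph{Key observation.} Consider one vehicle with $\Qmax=1$. It carries a single passenger $r$ on a direct drive from $o_r$ to $d_r$, since skipping stations is allowed. Between two served requests it is empty, so it may turn freely. Directionality is therefore automatically satisfied, and the order of $\Hset$ along the line is irrelevant. The loaded distance equals the direct distance of the accepted requests, so the saved-distance term reduces to minus the total \emph{empty} distance. The objective becomes $\wpax\cdot(\#\text{accepted}) - \wdist\cdot(\text{empty distance})$. We choose $\wpax$ larger than any possible total tour length (polynomially bounded), which forces every request to be accepted in a solution meeting the threshold. The problem then reduces to ordering the requests $r_1,\dots,r_N$ so as to minimize $\sum_i t_{d_{r_i},o_{r_{i+1}}}$, a stacker-crane-type path problem. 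Tours may start anywhere, so there is no initial empty leg.

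\emph{Reduction.} Take a grid graph with vertex set $V\subset\mathbb{Z}^2$, $|V|=N$, where two vertices are adjacent exactly when they are at distance $1$. For each $v\in V$ introduce two stations $o_v=v$ and $d_v=v+(\delta,0)$ with $\delta=1/(10N)$, and one request from $o_v$ to $d_v$. All $2N$ points are distinct, so they form a line in any fixed order with Euclidean distances.
\begin{itemize}
\item \emph{Hamiltonian path gives a cheap tour.} A Hamiltonian path yields an order whose every empty leg has length at most $1+\delta$. The empty distance is then at most $(N-1)(1+\delta)$.
\item \emph{Cheap tour gives a Hamiltonian path.} Any order contains a consecutive pair $v,v'$ that is not adjacent, or equal (impossible here). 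Such a pair satisfies $\|v-v'\|\ge\sqrt2$, so that leg has length at least $\sqrt2-\delta$, while every other leg has length at least $1-\delta$. The empty distance is then at least $(N-2)(1-\delta)+\sqrt2-\delta$.
\end{itemize}
With our choice of $\delta$, this lower bound exceeds $(N-1)(1+\delta)$ because $\sqrt2-1>0.4>2N\delta$. Setting the threshold to $\wpax N-\wdist(N-1)(1+\delta)$ gives a polynomial reduction.

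\emph{Main obstacle.} The main difficulty is not the combinatorics but the gap analysis. We must make sure that the $\delta$-perturbation needed to separate origin from destination does not close the $\sqrt2-1$ gap, and that the irrational Euclidean distances do not interfere with membership in \NP. The choice $\delta=\Theta(1/N)$ handles the first point, since the gap is a constant while the accumulated error is at most $2N\delta$. The slack this leaves also lets us use a rational threshold strictly between the two bounds, which handles the second.
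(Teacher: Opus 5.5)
Your proof is correct, but it takes a different route from the paper's.

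\textbf{The paper's route.} The paper reduces from Euclidean \textsc{path-TSP}. The stations are the TSP points, and every station $h_i$ receives one degenerate request with $o_{r_i}=d_{r_i}=h_i$. With $\wpax=\sum_{i,j}t_{i,j}$, $\wdist=1$ and threshold $\wpax n-k$, it argues that a solution reaching the threshold must serve all requests. By the triangle inequality it may also be assumed to visit each station exactly once, so the tour is a Hamiltonian path of length at most $k$. Since all requests have zero length, the objective encodes the path length exactly. No gap analysis is needed, and the Euclidean property is inherited directly from the source problem.

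\textbf{Your route.} You reduce from Hamiltonian path in grid graphs and make the capacity-one structure explicit: loaded legs are direct, the order of the line is irrelevant, and the objective is $\wpax$ times the number of accepted requests minus $\wdist$ times the empty distance. You then separate yes-instances from no-instances by the $1+\delta$ versus $\sqrt{2}-\delta$ gap.

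\textbf{What each buys.} Your route costs a perturbation argument, but it gains several things the paper's proof glosses over:
\begin{itemize}
\item All your requests satisfy $o_r\neq d_r$, so hardness does not rely on zero-length requests, which the paper's own instance generator explicitly excludes.
\item Your slack lets you pick a rational threshold with room to spare. The paper's $\wpax$ and threshold are sums of square roots.
\item Your membership argument actually bounds the size of the certificate.
\end{itemize}

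\textbf{Two small points to tighten.}
\begin{itemize}
\item State $\wpax$ concretely. For example, $\wpax>\wdist(N-1)(1+\delta)$ suffices; ``larger than any possible tour length'' is not a bounded quantity.
\item Make sure the grid-graph result you cite covers Hamiltonian paths with unspecified endpoints. Your instance cannot prescribe where the tour starts and ends.
\end{itemize}
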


\Cref{thm:complexity:lipdp} can be shown by a reduction from the \textsc{open traveling salesperson problem} which has been proven to be \NP-complete even for Euclidean instances by \cite{PAPADIMITRIOU1977237}.
The result follows from the equivalence of the \textsc{TSP} graph with the \lipdp line, by adjusting the objective function weights to ensure that all requests need to be transported. A full proof is given in the Appendix in \Cref{sec:appendix:complexity}.

\section{Modeling and Mathematical Formulation}\label{sec:model}
An overview of all notation is given in \Cref{tab:params} in the Appendix.

\subsection{Modeling Stopping Patterns}\label{sec:model:stoppingpattern}
Each request $r \in \Rset$ has an inherent direction of travel with respect to the underlying sequence of stations $\Hset$: if $o_r < d_r$, we call this \emph{ascending}, otherwise \emph{descending}.
We denote by $\Rasc$ (resp. $\Rdesc$) the set of ascending (descending) requests, and by $\dirs$ the respective direction.

Now consider a vehicle tour, i.e., the sequence of stations where a vehicle stops. Analogously to the denomination of the requests, we can divide the tour into ascending and descending \emph{sublines}. Then, two consecutive sublines in a tour are connected by a turn, and we refer to the stations where a vehicle changes direction as \emph{turn stations}. Note that there are three reasons for a subline to stop at a station $h \in \Hset$: to pick up a request, to drop off a request, or to turn.
The station in $\Hset$ where a subline starts (resp. ends) is called the \emph{start station} (\emph{end station}). 
For two consecutive sublines $s_i, s_j$, the end station of subline $s_i$ is a turn station and corresponds exactly to the start station of $s_j$. This ensures a vehicle tour is connected.
In this sense, we subdivide a tour at its turns and obtain a set of \emph{positions}, where each position is assigned a, not necessarily distinct, subline. 

Each subline defines a \emph{stopping pattern} which we encode as a vector in $\set{0,1}^n$, where a $1$ at index $h$ corresponds to stopping at station $h \in \Hset$, and a $0$ corresponds to not stopping there.
The first entry of a stopping pattern that is equal to $1$ is called the \emph{lowest} station and the last entry which is equal to $1$ is called the \emph{highest} station. 
The length $l_j$ of a stopping pattern $s_j$ (and thus of all sublines using this stopping pattern) is given by the sum of the distances between consecutive stations that are visited by the stopping pattern. 
We denote the set of all stopping patterns by $\Sset$. 
For modeling reasons, we include stopping patterns that contain only one station into this set; they have length $0$ and are called \emph{single-stop patterns}. 
The $\mathbf{0}$-vector does not model a stopping pattern, as it does not stop at any station, and is thus not included in $\Sset$. Then, there are $u := 2^\sizeH - 1$ possible stopping patterns on a sequence of $\sizeH$ stations. 
Let $\Jset := \set{1, \ldots, \sizeS}$ be the index set of $\Sset$.

Without loss of generality, each tour starts with an ascending subline, so that ascending sublines are assigned to odd positions, and descending sublines are assigned to even positions. We use $\Pasc$ and $\Pdesc$ to denote the set of ascending and descending positions, use $\Pdir$ where $\dirs$, and denote by $\Pset := \set{1, \ldots, \sizeP}$ all positions of a vehicle's tour.
Consequently, ascending requests may only be assigned to ascending positions, and analogously for descending requests.

Note that the index of a request in a vehicle's tour, which is part of the \lipdp solution, can be computed from the request's assigned position and the sublines in the vehicle tour up until this position.

\begin{figure}[hbt]
    \centering
    \includegraphics[width=0.7\linewidth,page=3]{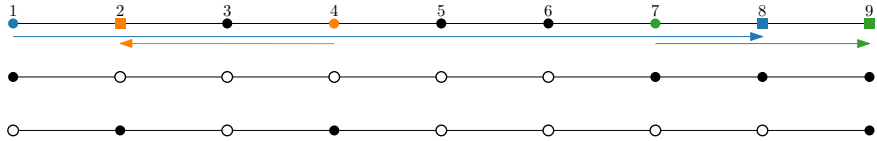}
    \caption{Line with requests (origins as colored circles, destinations as colored squares) and the two stopping patterns used in \Cref{fig:example-solution}. Filled markers are visited, unfilled markers are skipped stations.}
    \label{fig:example-patterns}
\end{figure}

Revisiting the example instance from \Cref{fig:example-instance-and-solution}, the two stopping patterns used in the solution are depicted in \Cref{fig:example-patterns}. Here, station number $1$ is the lowest station and number $9$ the highest station of the first stopping pattern, with the vehicle turning at station 9. The second stopping pattern starts with its highest station at number $9$ and its lowest station at number $2$. Note that the second stopping pattern `stops' at station $9$, as this corresponds to the end station of the previous pattern, without picking up or dropping off a request, to ensure a continuous tour.

\subsection{Mixed-Integer Linear Programming Model}\label{sec:model:milp}
The following mixed-integer linear programming model for the \lipdp uses the concept of stopping patterns by explicitly enumerating the set $\Sset$ on the given set of stations $\Hset$.

\paragraph{Positions}

For modeling purposes, we apriori fix the number of positions within the tour for each vehicle. Each position needs to be assigned exactly one stopping patterns from $\Sset$. 

\paragraph{Overlapping Requests}
Given a set of requests $\Rset$, we define a set of \emph{overlapping requests} $\Gset$ to be a nonempty subset of $\Rset$ such that, for every pair of entries $r_i, r_j \in \Gset$, 
\begin{itemize}
    \item $r_i$ and $r_j$ travel in the same direction, and
    \item  $r_i$ and $r_j$ overlap spatially, i.e., their paths along the line share at least two stations.
\end{itemize}
We denote by $\Gasc$ (resp. $\Gdesc$) the set of all such overlapping request sets in ascending (descending) direction. These sets are used to model the capacity restriction in our MILP model.

\paragraph{Parameters}
We denote by $\Left{h}{j} \in \set{0,1}^\sizeH$ (resp. $\Right{h}{j} \in \set{0,1}^\sizeH$) if station $h \in \Hset$ is the lowest (highest) station of stopping pattern $s_j \in \Sset$. Exactly one entry of $\Left{h}{j}$ ($\Right{h}{j}$) is non-zero. We use $s_j(h) \in \set{0,1}$ to denote whether stopping pattern $s_j \in \Sset$ visits station $h \in \Hset$. As before, $l_j \geq 0$ is the length of $s_j \in \Sset$.

\paragraph{Variables}
Recall that $\Jset := \set{1, \ldots, \sizeS}$ is the index set for the set of stopping patterns $\Sset$. In our model, binary variables $y_{p,k}^j$ encode if a stopping pattern $j \in \Jset$ is assigned to position $p \in \Pset$ of vehicle $k \in \Kset$. 
Binary variables $x_{p,k}^{r}$ encode whether a request $r \in \Rset$ is assigned to position $p \in \Pset$ of vehicle $k \in \Kset$, where only feasible (in terms of direction) combinations of $r$ and $p$ are allowed. Variables $\Sstart{h}{p,k}$ (resp. $\Send{h}{p,k}$) denote whether $h \in \Hset$ is the start (end) station of the stopping pattern assigned to position $p \in \Pset$ of vehicle $k \in \Kset$. Finally, continuous variables $d_k$ denote the tour length of vehicle $k \in \Kset$.

\paragraph{Model}
The full MILP model with given objective function weights $\wpax$ and $\wdist$ is as follows:
\begin{maxi!}[3]<b>
{}{\sum_{k \in \Kset} \Biggl( \sum_{\substack{\dir \in\\ \set{\asc, \desc}}} \sum_{p \in \Pdir} \sum_{r \in \Rdir} \bigl(\wpax + \wdist \cdot t_{o_r, d_r} \bigr) \, x_{p,k}^{r}  - \wdist \cdot d_k \Biggr)}{\label{eq:master-milp}}{\label{eq:master-milp:obj}}
\addConstraint{\sum_{k \in \Kset} \sum_{p \in \Pdir} x_{p,k}^{r}}{\leq 1}{\quad \forall r \in \Rdir, \dirs \label{eq:master-milp:1}}
\addConstraint{\sum_{r \in \Gset} x_{p,k}^{r}}{\leq \Qmax}{\quad \forall p \in \Pdir, k \in \Kset, \Gset \in \Gdir, \dirs \label{eq:master-milp:2}}
\addConstraint{\Sstart{h}{p,k} - \Send{h}{p-1,k}}{= 0}{\quad \forall p \in \Pset \setminus \set{1}, h \in \Hset, k \in \Kset \label{eq:master-milp:3}}
\addConstraint{\sum_{h \in \Hset} \Sstart{h}{p,k}}{= 1}{\quad \forall p \in \Pset, k \in \Kset \label{eq:master-milp:4}}
\addConstraint{\sum_{h \in \Hset} \Send{h}{p,k}}{= 1}{\quad \forall p \in \Pset, k \in \Kset \label{eq:master-milp:5}}
\addConstraint{\sum_{j \in \Jset} y_{p,k}^j}{= 1}{\quad \forall p \in \Pset, k \in \Kset \label{eq:master-milp:6}}
\addConstraint{x_{p,k}^{r} - \sum_{j \in \Jset} {y_{p,k}^j \cdot s_j(o_r) \cdot s_j(d_r)}}{\leq 0}{\quad \forall r \in \Rdir, p \in \Pdir, k \in \Kset, \dirs \label{eq:master-milp:7}}
\addConstraint{\sum_{j \in \Jset} y_{p,k}^j \cdot \Left{h}{j} - \Sstart{h}{p,k}}{\leq 0}{\quad \forall p \in \Pasc, k \in \Kset, h \in \Hset \label{eq:master-milp:8}}
\addConstraint{\sum_{j \in \Jset} y_{p,k}^j \cdot \Right{h}{j} - \Sstart{h}{p,k}}{\leq 0}{\quad \forall p \in \Pdesc, k \in \Kset, h \in \Hset \label{eq:master-milp:9}}
\addConstraint{\sum_{j \in \Jset} y_{p,k}^j \cdot \Right{h}{j} - \Send{h}{p,k}}{\leq 0}{\quad \forall p \in \Pasc, k \in \Kset, h \in \Hset \label{eq:master-milp:10}}
\addConstraint{\sum_{j \in \Jset} y_{p,k}^j \cdot \Left{h}{j} - \Send{h}{p,k}}{\leq 0}{\quad \forall p \in \Pdesc, k \in \Kset, h \in \Hset \label{eq:master-milp:11}}
\addConstraint{\sum_{p \in \Pset} \sum_{j \in \Jset} l_j \cdot y_{p,k}^j - d_k}{\leq 0}{\quad \forall k \in \Kset \label{eq:master-milp:12}}
\addConstraint{y_{p-2, k}^j + y_{p-1,k}^j - y_{p,k}^j}{\leq 1}{\quad\forall k \in \Kset, p \in \Pset\setminus \set{1, 2}, j \in \Jset : l_j = 0 \label{eq:master-milp:13}}
\addConstraint{{d_{k+1} - d_k}}{{\leq 0}}{\quad {\forall k \in \Kset \setminus \set{\sizeK}} \label{eq:master-milp:14}}
\addConstraint{d_k}{\geq 0}{\quad \forall k \in \Kset \label{eq:master-milp:19}}
\addConstraint{x_{p,k}^{r}}{\in \set{0,1}}{\quad \forall r \in \Rdir, p \in \Pdir, k \in \Kset, \dirs \label{eq:master-milp:15}}
\addConstraint{y_{p,k}^j}{\in \set{0,1}}{\quad \forall j \in \Jset, p \in \Pset, k \in \Kset \label{eq:master-milp:16}}
\addConstraint{\Sstart{h}{p,k}}{\in \set{0,1}}{\quad \forall p \in \Pset, k \in \Kset, h \in \Hset \label{eq:master-milp:17}}
\addConstraint{\Send{h}{p,k}}{\in \set{0,1}}{\quad \forall p \in \Pset, k \in \Kset, h \in \Hset \label{eq:master-milp:18}}
\end{maxi!}

Constraints \eqref{eq:master-milp:1} ensure that a request is transported at most once across all positions and all vehicles. Setting these constraints to equality would ensure that all requests are transported. Constraints \eqref{eq:master-milp:2} uphold the maximum capacity of $\Qmax$ per vehicle. Constraints \eqref{eq:master-milp:3} ensure that tours are consecutive. 
Each position in a vehicle's tour can have exactly one start and one end station, as modeled by Constraints~\eqref{eq:master-milp:4} and~\eqref{eq:master-milp:5}.
Constraints \eqref{eq:master-milp:6} ensure that each vehicle's position is assigned exactly one stopping pattern. 
Linking the $x$ and $y$ variables, Constraints~\eqref{eq:master-milp:7} ensure that, for each vehicle and each position, the assigned stopping pattern $s_j$ stops at both the origin $o_r$ and the destination $d_r$ of all requests $r$ that are assigned to the respective position of the respective vehicle.
Constraints~\eqref{eq:master-milp:8} to~\eqref{eq:master-milp:11} handle the assignment of the start and end variables, 
according to the assigned stopping pattern's lowest and highest station and the position's direction. 
Constraints \eqref{eq:master-milp:12} sum the lengths of the assigned stopping patterns to the vehicle tour length, which is minimized in the objective function.
The next constraints are \emph{symmetry-breaking} constraints:~\eqref{eq:master-milp:13} ensure that, once a tour contains two consecutive single-stop patterns (these are the only stopping patterns with $l_j=0$), the vehicle remains at this station. This ensures that stopping patterns of nonzero length are placed at the beginning of the tours.
Constraints~\eqref{eq:master-milp:14} order the lengths of tours such that vehicles of smaller index drive longer tours. Since we have homogeneous vehicles, every vehicle can drive every tour. Thus, this constraint reduces the number of solutions with the same objective function value.
Finally, Constraints~\eqref{eq:master-milp:15} to~\eqref{eq:master-milp:19} set the appropriate variable ranges.

\section{Generating Stopping Patterns}\label{sec:gen-patterns}
According to \Cref{thm:complexity:lipdp}, given a set of requests $\Rset$, selecting the most profitable stopping patterns from $\Sset$ and determining how to position these to build vehicle tours to maximize the profit based on the requests $\Rset$ is \NP-hard. 

For this reason, we now investigate a related problem to the \lipdp, where we need to find only one stopping pattern, namely the one that will gain the highest profit, subject to upholding the vehicle capacity $\Qmax$. More precisely, we assume that each request $r \in \Rset$ is associated with a \emph{reward} $\iota_r \geq 0$. As before, we have distances $t_{h,h'} \geq 0$ between stations $h,h' \in \Hset$.

\defoptproblem{\Ppattern}
{A \lipdp instance $\Iset$ with stations $\Hset$ with pairwise distances $t$, a direction $\dir$, requests $r\in \Rdir$ 
with rewards $\iota_r\ge 0$, and vehicles $\Kset$ of capacity~$\Qmax$.}
{A stopping pattern and a set of accepted requests that respects capacity constraints and maximizes the profit, i.e., the sum of rewards of accepted requests minus the length of the stopping pattern}

We also consider an uncapacitated problem version, the \Ppatternuncap problem, where the vehicles have unlimited capacity. 
Note that for a solution of the \Ppatternuncap problem, we do not need to specify the set of accepted requests explicitly, since, given a stopping pattern $s$ and a direction $\dir$, it is always best to accept all requests from $\Rdir$ whose origin and destination are contained in the stopping pattern. 

In \Cref{sec:gen-patterns:complexity}, we show that the \Ppatternuncap and \Ppattern problems are \NP-hard, and discuss exact solution approaches in \Cref{sec:gen-patterns:solving}.

\subsection{Complexity}\label{sec:gen-patterns:complexity}
In the following, we consider the decision version corresponding to the \Ppatternuncap problem: \emph{``Given an instance of the \Ppatternuncap problem and a threshold $B$, is there a stopping pattern with profit at least $B$?''}. We show that this problem is \NP-hard 
by a reduction from the \Pclique problem and conclude \NP-hardness of the \Ppattern problem.
The idea of the proof is that the stations served by a stopping pattern correspond to vertices, while the served requests correspond to edges, such that a stopping pattern with profit at least~$B$ corresponds to a clique of size at least~$b$.

\begin{thm}\label{thm:pricing-np-hard}
    The \Ppatternuncap problem is \NP-hard.
\end{thm}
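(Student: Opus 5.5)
We reduce from \Pclique: given a graph $G=(V,E)$ with $V=\set{v_1,\ldots,v_N}$ and an integer $b$, decide whether $G$ contains a clique of size $b$. Station $h$ of the line $\Hset=(1,\ldots,N)$ represents vertex $v_h$. For every edge $\set{v_i,v_j}\in E$ with $i<j$ we introduce one ascending request $r$ with $o_r=i$, $d_r=j$ and reward $\iota_r=M$, for a large integer $M$ chosen below. The direction is $\asc$. All distances are set to a common constant, $t_{h,h'}=1$ for $h\neq h'$ and $t_{h,h}=0$; this is even metric. A stopping pattern visiting $k$ stations then has length $k-1$. Since the problem is uncapacitated, its profit is $M\cdot|E(S)|-(|S|-1)$, where $S$ is the set of visited stations and $E(S)$ is the set of edges of $G$ inside $S$. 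The construction is clearly polynomial.

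We set $M := N$ and $B := M\binom{b}{2}-(b-1)$, and claim that $G$ has a clique of size at least $b$ if and only if some stopping pattern has profit at least $B$.

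For the forward direction, a clique $C$ with $|C|=b$ used as a stopping pattern gives profit exactly $B$.

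For the converse, take a pattern $S$ with profit at least $B$. First, $|S|-1\le N-1<M$, so
\[
M|E(S)| \;\ge\; B+|S|-1 \;>\; M\tbinom{b}{2}-M,
\]
which forces $|E(S)|\ge\binom{b}{2}$. If $|E(S)|>\binom{b}{2}$, the edge count alone does not yield a $b$-clique, so we argue differently: we compare with the densest subgraph. This comparison is the main obstacle.

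To avoid it, we reduce from the variant asking for a clique of size exactly $b$ and strengthen the cost side so that adding stations is expensive. The plan is to use rewards $M$ and distances $t_{h,h'}=L$ for $h\neq h'$, with $L$ chosen so that the cost of each extra station outweighs the gain from extra edges. A set of $k$ stations earns at most $M\binom{k}{2}$, so the profit is at most $M\binom{k}{2}-L(k-1)$, whereas the threshold is $B=M\binom{b}{2}-L(b-1)$.

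A single threshold cannot pin down $k$ this way, because $\binom{k}{2}$ is superlinear in $k$. The cleaner route is to make the reward structure penalise non-edges instead. We attach requests to all pairs, so that profit is rewarded only when every pair in $S$ is an edge:

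for each pair $i<j$ with $\set{v_i,v_j}\in E$, add a request with reward $1$;

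each chosen station costs $\lambda$, realised by the unit distance scaled to $\lambda$, with $\lambda<(b-1)/2$.

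Since rewards are nonnegative, non-edges cannot be penalised directly, and the problem becomes densest-subgraph-like. For this reason we finally choose the reduction from \Pclique restricted to regular graphs, or equivalently we invoke the known NP-hardness of deciding whether a graph has $b$ vertices spanning at least $\binom{b}{2}$ edges. With $M=N$, the converse computed above gives exactly: some pattern has profit at least $B$ iff some vertex set $S$ satisfies
\[
|E(S)|\ge\tbinom{b}{2}+\frac{|S|-b}{M},
\]
that is, $|E(S)|\ge\binom{b}{2}$ with $|S|\le b$ up to the rounding controlled by $M>N$.

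We verify this carefully. Suppose $|S|>b$. Then either $|E(S)|\ge\binom{b}{2}+1$ holds, and we must argue that such an $S$ still contains a $b$-clique; or the inequality fails. Handling this case is the technical heart of the proof. We plan to resolve it by choosing the distance so that the cost of a station exceeds the marginal edge gain, taking $t$ to grow quadratically along the line while keeping the structure fixed. If that fails, we fall back to reducing from the NP-complete \textsc{densest $b$-subgraph} decision problem with the exact-size constraint enforced by the cost term.
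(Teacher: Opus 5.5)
There is a genuine gap: the proposal never arrives at a correct reduction.

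\textbf{The first construction fails.} You use one station per vertex, one request of reward $M$ per edge, and uniform distances. This breaks in the converse direction. Profit at least $B$ only gives $|E(S)|\ge\binom{b}{2}$, and a set with more than $b$ vertices can meet this without containing a $b$-clique. Concretely, take $G=C_4$ and $b=3$. Then $M=4$ and $B=10$, and the pattern visiting all four stations has profit $4\cdot 4-3=13\ge B$, although $C_4$ is triangle-free.

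Your later reformulation does not help. Since $0<(|S|-b)/M<1$ for $|S|>b$, the condition $|E(S)|\ge\binom{b}{2}+(|S|-b)/M$ just means $|E(S)|\ge\binom{b}{2}+1$, not ``$|S|\le b$ up to rounding''; $C_4$ satisfies it. Restricting to regular graphs ($C_4$ is regular) or restating \textsc{clique} as ``$b$ vertices spanning $\binom{b}{2}$ edges'' changes nothing. The problem is on the target side: stopping patterns are not limited to $b$ stations.

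\textbf{No tuning of $M$, $L$, $B$ can repair instances of this kind.} With uniform distance $L$, a pattern visiting $S$ has length $L(|S|-1)$. You are therefore maximizing $M|E(S)|-L|S|$, i.e., nonnegative pair rewards minus a cost per selected station. This is the classical selection (maximum-weight closure) problem, solvable by a minimum cut; non-emptiness is handled by forcing each station in turn. So your construction maps \textsc{clique} into a polynomial-time special case and cannot be correct unless $\mathsf{P}=\mathsf{NP}$.

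The ideas of letting distances ``grow quadratically'' and of falling back on densest $b$-subgraph are left unexecuted. If the distances come from positions on the line, the pattern length even telescopes to the distance between its lowest and highest station, so intermediate stations are not penalized at all. Neither idea supplies the missing ingredient: a mechanism that forces exactly $b$ vertices to be selected. That mechanism must exploit the fact that the length depends on \emph{consecutive} visited stations and is not additive over stations.

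\textbf{How the paper supplies it.} The paper builds such a gadget, with reward scales $\alpha\gg\beta\gg\gamma\gg\delta\gg\varepsilon$ so that the incentives act lexicographically.
\begin{enumerate}
\item Base nodes, forced into the pattern by base requests of huge reward $\alpha$, delimit $b$ slots.
\item Every slot contains a copy of each vertex. Each copy is placed so that a detour through one copy is free, since $\beta+\beta$ equals the base spacing $2\beta$. A second copy in the same slot costs an extra $2\beta$.
\item Particularity requests (reward $\gamma$) make it optimal to visit exactly one copy per slot, with pairwise distinct vertex indices.
\item A mirrored right side with consistency requests (reward $\delta$) makes both sides select the same $b$ vertices.
\item Edge requests (reward $\varepsilon$) from left to right copies reach the threshold if and only if these $b$ vertices form a clique.
\end{enumerate}
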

\begin{proof}
    We reduce the \Pclique problem, which is one of Karp's 21 original \NP-hard problems~\citep{karp_np_complete_72}, to the \Ppatternuncap problem.

    \defdecproblem{\Pclique}
    {A graph $G$, an integer $b \geq 0$.}
    {Is there a clique of size~$b$ in $G$?}

    Let~$(G,b)$ be an instance of the \Pclique problem with $G=(V,E)$ and~$V=\set{1,\dots,n}$. To avoid confusion, we refer to the defining sets of $G$ as \emph{vertices} and \emph{edges}, while the defining sets of the graph that we will construct are referred to as \emph{nodes} and \emph{arcs}.
    Assume without loss of generality that~$n\geq b$.
    We construct an instance of the \Ppatternuncap problem as follows:
    
    Before we can add the nodes and requests corresponding to the vertices and edges of~$G$, which we call \emph{vertex nodes} and \emph{edge requests}, we define multiple auxiliary nodes and requests used in the construction of our instance. Let $\alpha, \beta, \gamma, \delta, \varepsilon$ be positive constants such that $\alpha \gg \beta \gg \gamma \gg \delta \gg \varepsilon \gg 0$.
    
    We begin by defining a sequence of nodes $(h_\mathrm{L}^{1}, \ldots, h_\mathrm{L}^{b+1}, h_\mathrm{R}^{1}, \ldots, h_\mathrm{R}^{b+1})$ consisting of $2(b+1)$ \emph{base nodes}, which we divide into a \emph{left} side and \emph{right} side, as denoted by the indices. The distance between any two consecutive base nodes is set to $2\beta$. 
    For all pairs of nodes for which the distance is not explicitly specified, we set it to the shortest-path distance with respect to the network of arcs (with specified distances).
    Between each pair of consecutive base nodes, we insert a \emph{base request} with profit~$\alpha$. As~$\alpha \gg \beta$, the optimal stopping pattern for 
    the instance defined by the components introduced so far is to visit all base nodes, yielding a profit of~$(2b+1)\alpha - (2b+1)2\beta$.

    \begin{figure}[htbp]
        \centering
        \begin{subfigure}{.13\textwidth}
            \centering
            \includegraphics[page=17,width=\textwidth]{figures/00_other/clique-reduction.pdf}
            \caption{}
            \label{fig:clique-instance}
        \end{subfigure}
        \hspace{.05\textwidth}
        \begin{subfigure}{.8\textwidth}
            \centering
            \includegraphics[page=7,width=\textwidth]{figures/00_other/clique-reduction.pdf}
            \caption{}
            \label{fig:clique-edge-requests}
        \end{subfigure}
        \caption{(a) An instance of \Pclique where $b=3$, with the vertices $1$, $2$, and $4$ forming a $3$-clique (highlighted in red).
        (b) The \Ppatternuncap instance constructed from the \Pclique instance in (a). An optimal stopping pattern includes the vertex nodes corresponding to the $3$-clique in (a) with six of the (red) edge requests being served by this stopping pattern. The base, particularity, and consistency requests are omitted for clarity.}
        \label{fig:pricing-np-hard-construction}
    \end{figure}

    Between each pair of consecutive base nodes on each side, i.e., between~$h_\mathrm{L}^{j}$ and~$h_\mathrm{L}^{j+1}$ (on the left) for each~$j\in[b]$, we insert~$n$ \emph{vertex nodes}~$v_{1}^{j}, \ldots, v_{n}^{j}$. Each vertex node~$v_{i}^{j}$ is at distance~$\beta$ to its neighboring base nodes~$h_\mathrm{L}^{j}$ and~$h_\mathrm{L}^{j+1}$. The pairwise distance between two vertex nodes~$v_{i}^{j}$ and~$v_{i'}^{j}$ for~$i\neq i'$ is~$2\beta$. The right side follows analogously, inserting vertex nodes~$w_{1}^{j}, \ldots, w_{n}^{j}$ between~$h_\mathrm{R}^{j}$ and~$h_\mathrm{R}^{j+1}$ for each~$j\in[b]$. Again, all distances that are not explicitly specified are computed as shortest-path distances. 
    We insert the vertex nodes in order of their index between their respective base stations. That is, between~$h_\mathrm{L}^{j}$ and~$h_\mathrm{L}^{j+1}$ we have the sequence~$(h_\mathrm{L}^{j}, v_{1}^{j}, v_{2}^{j}, \ldots, v_{n}^{j}, h_\mathrm{L}^{j+1})$. 
    See \Cref{fig:pricing-np-hard-construction} for an example of the resulting instance.

    The idea is that the vertex nodes visited by an optimal stopping pattern correspond to the vertices of a clique of size~$b$ in the original graph~$G$. However, for the instance constructed so far, the optimal stopping pattern would only have to visit the base nodes. Therefore, we insert on each side for every~$i,i'\in[n]$ and $j,j'\in[b]$ with $i\neq i'$ and~$j<j'$ a \emph{particularity request} between vertex nodes~$v_{i}^{j}$ and~$v_{i'}^{j'}$ (resp.~$w_{i}^{j}$ and~$w_{i'}^{j'}$) with profit~$\gamma$. This leads to the most profitable stopping pattern visiting exactly one vertex node~$v_{i}^{j}$ (resp.~$w_{i}^{j}$) between each pair of base nodes~$h_\mathrm{L}^{j}$ and~$h_\mathrm{L}^{j+1}$ (resp.~$h_\mathrm{R}^{j}$ and~$h_\mathrm{R}^{j+1}$). Note that visiting \emph{two} vertex nodes between the same two base nodes would incur an additional driving cost of~$2\beta$, which cannot be offset by the additional profit of the particularity requests, as~$\beta \gg \gamma$. Further, the indices~$i$ of the visited vertex nodes on each side are distinct: suppose that an optimal stopping pattern visits two vertex nodes with the same index~$i$ on the left (same for the right) side, i.e., visits vertex nodes~$v_{i}^{j}$ and~$v_{i}^{j'}$ with~$j\neq j'$. As there are no particularity requests between these two vertex nodes, the stopping pattern misses out on a profit of~$\gamma$ compared to visiting a different vertex node~$v_{i'}^{j'}$. Since~$n\geq b$, we can always visit vertex nodes with distinct indices. This results in an optimal stopping pattern having an additional profit of~$2\binom{b}{2}\gamma$ from the particularity requests.

    An optimal stopping pattern for the instance constructed so far visits~$b$ distinct vertex nodes on each side, corresponding to~$2b$ vertices in~$G$. To ensure that the vertex nodes visited on each side correspond to the same vertices, we add \emph{consistency requests} for each~$i\in[n]$ between vertex nodes~$v_{i}^{j}$ and~$w_{i}^{j'}$ for each~$j,j'\in[b]$ with profit~$\delta$. This ensures that the most profitable stopping pattern visits vertex nodes with the same indices~$i$ on both sides, as this yields an additional profit of~$b\delta$. Further, as~$\gamma \gg \delta$, we still visit vertex nodes with distinct indices on each side, as the loss of a particularity request cannot be offset by the additional profit of the consistency requests.

    To enforce that the visited vertex nodes correspond to a clique in~$G$, we lastly add for each edge~$(i,i')\in E$ an \emph{edge request} between vertex nodes~$v_{i}^{j}$ and~$w_{i'}^{j'}$ for each~$j,j'\in[b]$ with profit~$\varepsilon$. If~$G$ contains a clique of size~$b$, visiting vertex nodes corresponding to the vertices of the clique, which consists of~$\binom{b}{2}$ edges, yields a profit of~$2\binom{b}{2}\varepsilon$ from the edge requests. Conversely, if a stopping pattern visits vertex nodes corresponding to vertices that do not form a clique, it misses out on at least one edge request. See \Cref{fig:clique-edge-requests} for an example of an optimal stopping pattern and its served edge requests.

    Therefore, if and only if the original graph~$G$ contains a clique of size~$b$ we obtain a stopping pattern with profit~$(2b+1)\alpha+2\binom{b}{2}\gamma + b\delta + \binom{b}{2}\varepsilon - (2b+1)2\beta$. Otherwise, all stopping patterns have a smaller profit. Thus, setting the threshold~$B$ to~$(2b+1)\alpha+2\binom{b}{2}\gamma + b\delta + \binom{b}{2}\varepsilon - (2b+1)2\beta$ we obtain in polynomial time an instance of \Ppatternuncap that is equivalent to the \Pclique instance~$(G,b)$.
\end{proof}

Note that, by setting the capacity to be sufficiently large, i.e., $\Qmax \geq \sizeR$, we have additionally shown that the \Ppattern problem is \NP-hard.

\subsection{Solving the \Ppattern Problem}\label{sec:gen-patterns:solving}
In the following, we focus on integer programming approaches to solve the \Ppattern problem.

Due to the inherent directionality (see \Cref{sec:model:stoppingpattern}), which ensures that a stopping pattern may only serve requests traveling in the same direction, we may consider only the rewards of requests in one direction when solving the \Ppattern problem. We describe the process for the ascending direction here; the descending direction follows analogously. 

We construct an acyclic tournament digraph on stations $(1, \ldots, \sizeH)$: let $\Vset := \set{0, 1, \ldots, \sizeH, \sizeH +1}$ denote the extension by an artificial start $0$ and end depot $\sizeH + 1$. The complete set of directed arcs is given by $\set{ (g,h) \in \Vset \times \Vset : g < h}$ with arc weights $w_{g,h} \geq 0$. Each request $r \in \Rasc$ is assigned a reward $\iota_r \geq 0$.

The idea is as follows: each stopping pattern is expressed as a path between depots $0$ and $\sizeH + 1$ that contains exactly the stations visited by the stopping pattern.
The lowest station of the stopping pattern is the one which is connected to the starting depot $0$ and the highest station is the one which is connected to the end depot ${\sizeH+1}$.
The reward for a request is awarded only if both its origin and destination are visited by the stopping pattern. 
We want to find the path that maximizes the profit, i.e., the difference between collected request rewards and the sum of costs of the selected arcs.

The \Ppattern problem is a path-finding problem, which, to the best of our knowledge, has not yet been studied in combinatorial optimization. Unlike most other path-finding problems in acyclic graphs, it is \NP-hard, as shown in \Cref{thm:pricing-np-hard}.

\subsubsection{ILP for the \Ppattern Problem}\label{sec:gen-patterns:solving:ilp}
To formulate the \Ppattern problem as an ILP, we introduce binary variables $y_{g,h}$ which take the value $1$ if and only if the arc $(g,h)$ is selected, to denote which path is taken. To ensure that the capacity $\Qmax$ is respected, binary variables $x_r$ denote whether requests $r \in \Rset$ are served. The ILP model for the ascending direction is as follows:
\begin{maxi!}<b>
    {}{\sum_{r \in \Rasc} x_{r} \cdot \iota_r - \sum_{\substack{g,h \in \Vset:\\ g < h}}t_{g,h} \cdot y_{g,h}}{\label{eq:ilp-pricing-cap}}{\label{eq:ilp-pricing-cap:obj-fct}}
    \addConstraint{\sum_{\substack{h \in \Vset:\\ h \notin \set{0, n+1}}} y_{0,h}}{=1}{\label{eq:ilp-pricing-cap:entering}}
    \addConstraint{\sum_{\substack{h \in \Vset:\\ h \notin \set{0, n+1}}} y_{h, \sizeH+1}}{=1}{\label{eq:ilp-pricing-cap:leaving}}
    \addConstraint{\sum_{\substack{g \in \Vset:\\g<h}} y_{g,h} - \sum_{\substack{g \in \Vset:\\g>h}} y_{h,g}}{= 0}{\qquad \forall h \in \Vset\setminus \set{0,n+1}\label{eq:ilp-pricing-cap:flow}}
    \addConstraint{x_{r} - \sum_{\substack{i \in \Vset: \\i<o_r}}y_{i,o_r}}{\leq 0}{\qquad \forall r \in \Rasc \label{eq:ilp-pricing-cap:reward1}}
    \addConstraint{x_{r} - \sum_{\substack{i \in \Vset: \\i<d_r}}y_{i,d_r}}{\leq 0}{\qquad \forall r \in \Rasc \label{eq:ilp-pricing-cap:reward2}}
    \addConstraint{\sum_{r \in \Gset} x_r}{\leq \Qmax}{\qquad \forall \Gset \in \Gasc}{\label{eq:ilp-pricing-cap:capacity}}
    \addConstraint{y_{g,h}}{\in \set{0,1}}{\qquad \forall g,h \in \Vset: g < h}\label{eq:ilp-pricing-cap:var-y}
    \addConstraint{x_{r}}{\in \set{0,1}}{\qquad \forall r \in \Rasc}\label{eq:ilp-pricing-cap:var-x}
\end{maxi!}

The objective function~\eqref{eq:ilp-pricing-cap:obj-fct} maximizes the difference between the sum of the rewards for serviced requests and the length of the path. 
Constraint~\eqref{eq:ilp-pricing-cap:entering} ensures that exactly one arc leaving the start depot is chosen and Constraint~\eqref{eq:ilp-pricing-cap:leaving} ensures there is exactly one arc chosen that enters the end depot.
Both constraints forbid the arc $(0, \sizeH+1)$ to ensure that the corresponding stopping pattern is not the $\mathbf{0}$-vector. 
Constraints~\eqref{eq:ilp-pricing-cap:flow} are the typical flow conservation constraints.
Constraints~\eqref{eq:ilp-pricing-cap:reward1} and~\eqref{eq:ilp-pricing-cap:reward2} ensure that, for $r \in \Rasc$, ${x}_r$ is set to zero if $o_r$ and $d_r$ are not both visited by the constructed path. 
Finally, Constraints~\eqref{eq:ilp-pricing-cap:capacity} ensure that the capacity is not exceeded, similar to Constraints~\eqref{eq:master-milp:2}.

\subsubsection{MILP for the \Ppatternuncap Problem}\label{sec:gen-patterns:solving:ilp-uncap}
We additionally present a mathematical programming formulation for the related \Ppatternuncap problem. Here, when we remove the capacity restrictions~\eqref{eq:ilp-pricing-cap:capacity} from the above ILP~\eqref{eq:ilp-pricing-cap}, we may additionally relax the binary restrictions on the $x_r$ variables (i.e., we effectively obtain an MILP), while we observed that this may lead to fractional optimal solutions in the capacitated case. 
The resulting MILP formulation for the \Ppatternuncap problem in ascending direction is 
\begin{equation}\label{eq:ilp-pricing}
    \text{maximize } \eqref{eq:ilp-pricing-cap:obj-fct}\ \text{s.t. } \eqref{eq:ilp-pricing-cap:entering}-\eqref{eq:ilp-pricing-cap:reward2}, \eqref{eq:ilp-pricing-cap:var-y},\  {x_r \geq 0 \quad \forall r \in \Rasc.}
\end{equation}

\section{Branch-and-Price Algorithm for the \lipdp}\label{sec:bap}
We now return to the \lipdp and the question of exact solution methods. So far, we have seen that the \lipdp is \NP-hard, negating any hope of finding a polynomial-time solution algorithm. 
Since the set of stopping patterns $\Sset$ is exponential in size with $|\Sset| = 2^\sizeH - 1$, the MILP~\eqref{eq:master-milp} has an exponential number of variables, as it uses one variable $y^j_{p,k}$ per combination of position $p$, vehicle $k$, and stopping pattern $s_j$. 
Solving this MILP directly, explicitly creating all variables, is practically infeasible even for moderate instance sizes, see the experiments in \Cref{sec:appendix:stopping_patterns} in the Appendix.
Therefore, we apply a branch-and-price algorithm 
that allows us to start with a small number of stopping patterns, as it iteratively generates promising patterns. 

We provide an outline of our solution approach, sketched in~\Cref{fig:workflow}, before we discuss the details in the respective sections.
We will see that the tournament digraph and the optimization model that we constructed in~\Cref{sec:gen-patterns:solving} for the \Ppatternuncap problem serve perfectly to address the pricing problem in our column generation.
In the following, we will interchangeably use the index set $\Jset$ to refer to the set of stopping patterns $\Sset$.

In the context of branch-and-price, the MILP~\eqref{eq:master-milp} we want to solve is the so-called \emph{master problem} (MP). 
In a first step, we solve the LP-relaxation (LP) of the master problem. 
Due to the high number of variables, we apply column generation, which allows us to start with a small set of stopping patterns $\Jset'$ and to iteratively add profitable stopping patterns (see~\Cref{sec:bap:cg} for details). 
The used linear program for a given set of stopping patterns $\Jset'$, termed \emph{relaxed restricted master problem} (RRMP($\Jset'$)), is given in LP~\eqref{eq:rrmp} in the Appendix.
When no promising stopping patterns can be found anymore, we have obtained an optimal solution to (RRMP($\Jset$)).
If the solution is integer, it is also an optimal solution to (MP); in any case, it provides an upper bound on the objective value of (MP). 
To find a lower bound, we can compute a feasible solution by solving the restricted version of~(MP), where instead of using the full set of stopping patterns we use only the stopping patterns $\Jset'$ created during the column generation. 
In \Cref{sec:experiments:rootnodeheuristic}, we demonstrate experimentally that the solution found at this point (which corresponds to the root node of our branch-and-bound tree) is, in many cases, already close to optimal.

To find a (provably) optimal solution, the algorithm continues by branching on fractional variables, computing upper and lower bounds in the child nodes in a similar fashion as described above. This is described in more detail in \Cref{sec:bap:scheme}.

\subsection{Column Generation}\label{sec:bap:cg}
Initially, we consider only a small set of eligible stopping patterns, indexed by $\Jset'$, and solve the \emph{dual problem} (DP($\Jset'$)) of the relaxed restricted master problem (RRMP($\Jset'$)) on this reduced set.
In each step, based on the optimal solution of (DP($\Jset'$)), we augment $\Jset'$ by generating new, promising stopping patterns in the so-called \emph{pricing problem} and resolve the problem based on the new set of eligible stopping patterns.
This process is repeated until no further stopping patterns with positive reduced costs are found.
To avoid simultaneously adding variables and constraints when we include new stopping patterns, we include all single-stop patterns in the initial set $\Jset'$, so that (RRMP($\Jset'$)) always contains all constraints of type~\eqref{eq:master-milp:13}.
Note that in each iteration of the column generation, the whole set of variables $\set{y_{p,k}^{j^*}: p \in \Pset, k \in \Kset}$ corresponding to the identified stopping pattern $j^*$ is added to (RRMP($\Jset'$)). 

The linear programs for (RRMP($\Jset'$)) and (DP($\Jset'$)) are given in LPs~\eqref{eq:rrmp} and~\eqref{eq:dp} in the Appendix.

\subsubsection{Stopping Criterion}
Denote by $(z',\pi')$ a tuple of optimal solutions to the restricted problems (RRMP($\Jset'$)) and (DP($\Jset'$)), respectively. 
The solution $z'$ can be extended to a feasible solution $z''$ of (LP) with the same objective value by setting $y_{p,k}^j=0$ for all $j \in \Jset \setminus \Jset', p \in \Pset, k \in \Kset$.

Since all indices of single-stop patterns are included in the initial set $\Jset'$, the restricted dual program (DP($\Jset'$)) has the same set of variables as (DP($\Jset$)), but lacks the constraints associated with primal variables $y_{p,k}^j$ for all $j \in \Jset \setminus \Jset', p \in \Pset, k \in \Kset$. Thus, we can interpret $\pi'$ as a (not necessarily feasible) solution to (DP$(\Jset)$).
If $\pi'$ is feasible for (DP($\Jset$)), i.e., if it also fulfills the dual constraints associated with variables $y_{p,k}^j$ for all $j \in \Jset \setminus \Jset', p \in \Pset, k \in \Kset$, then $z''$ is optimal for (RRMP($\Jset$))$=$(LP).

In each step of the column generation, we verify that the index set $\Jset'$ contains the indices of stopping patterns of an optimal solution to (RRMP($\Jset$)) or identify promising stopping patterns to include in the set. To this end, we thus solve (DP($\Jset'$)), and check whether all dual constraints associated with variables $y_{p,k}^j$ for $j \in \Jset \setminus \Jset', p \in \Pset, k \in \Kset$ are satisfied.

\subsubsection{Pricing Problem}\label{sec:bap:cg:pricing-prob}
The pricing problem constitutes in identifying a promising stopping pattern or to prove that none exists which means that the index set $\Jset'$ contains the indices of stopping patterns of an optimal solution to (RRMP($\Jset$)). A straightforward approach would be to enumerate all stopping patterns with indices in $\Jset\setminus \Jset'$ and check whether the corresponding dual Constraints~\eqref{eq:dp:2} and~\eqref{eq:dp:3} are fulfilled. Any violated constraint indicates a variable $y_{p,k}^j$ with negative reduced cost. However, since $|\Jset| \in \Oh(2^n)$, this is prohibitively time-consuming and in the following we investigate alternative ways to solve the pricing problem.

\paragraph{The Reduced Cost Functions}
Denote by $\zeta_{p,k}$, $\iota_{p,k}^{r}$, $\lambda_{p,k}^{h}$, $\mu_{p,k}^{h}$, $\nu_k$, $\psi_{p,k}^j$ the dual variables associated with Constraints~\eqref{eq:master-milp:6}, \eqref{eq:master-milp:7}, \eqref{eq:master-milp:8}--\eqref{eq:master-milp:9}, \eqref{eq:master-milp:10}--\eqref{eq:master-milp:11}, \eqref{eq:master-milp:12}, and \eqref{eq:master-milp:13} in the master problem (or~\eqref{eq:rrmp:6-A}, \eqref{eq:rrmp:7-A}, \eqref{eq:rrmp:8-A}--\eqref{eq:rrmp:9-A}, \eqref{eq:rrmp:10-A}--\eqref{eq:rrmp:11-A}, \eqref{eq:rrmp:12-A}, and \eqref{eq:rrmp:13-A} in the (RRMP($\Jset$))), respectively.
Then, for any position corresponding to an ascending direction $p\in\Pasc$ and any stopping pattern $s$ which is not a single-stop pattern, the dual constraint associated to $y_{p,k}^j$ reads
\begin{equation}\tag{\ref{eq:dp:2}}
    \zeta_{p,k} - \sum_{r \in \Rasc} s_j(o_r) \cdot s_j(d_r) \cdot \iota_{p,k}^{r}  + \sum_{h \in \Hset} \left( \Left{h}{j} \cdot \lambda_{p,k}^{h} + \Right{h}{j} \cdot \mu_{p,k}^{h} \right) + l_j\nu_k\geq 0,
\end{equation}
where $s_j(h) = 1$ if $s_j$ stops at station $h \in H$, else $0$. Furthermore, $\Left{h}{j}$ (resp. $\Right{h}{j}$) is $1$ exactly when station $h \in H$ is the lowest (highest) station of stopping pattern $s_j$.
For positions corresponding to descending direction $p\in \Pdesc$ and $s_j$ with $l_j>0$ we have 
\begin{equation}\tag{\ref{eq:dp:3}}
    \zeta_{p,k} - \sum_{r \in \Rdesc} s_j(o_r) \cdot s_j(d_r) \cdot \iota_{p,k}^{r} + \sum_{h \in \Hset} \left( \Right{h}{j} \cdot \lambda_{p,k}^{h} + \Left{h}{j} \cdot \mu_{p,k}^{h} \right) + l_j\nu_k
\geq 0.
\end{equation}

In our pricing problem, we search for tuples $(j,p,k)$ for which these inequalities are not fulfilled.
For a given dual solution to (DP($\Jset'$)), specifying values for the dual variables $\zeta_{p,k}$, $\iota_{p,k}^{r}$, $\lambda_{p,k}^{h}$, $\mu_{p,k}^{h}$, $\nu_k$, and $\psi_{p,k}^j$, we thus define the \emph{reduced costs} of a stopping pattern $s_j$, in dependence of its position $p \in \Pset$ of vehicle $k \in \Kset$ as
\begin{align}
    f_{\text{rc}}^\asc(p,k) &=  - \zeta_{p,k} + \sum_{r \in \Rasc} s_j(o_r) \cdot s_j(d_r) \cdot \iota_{p,k}^{r} - \sum_{h \in \Hset} \left( \Left{h}{j} \cdot \lambda_{p,k}^{h} + \Right{h}{j} \cdot \mu_{p,k}^{h} \right) - l_j\cdot\nu_k \label{eq:red-cost-asc} \\
    f_{\text{rc}}^\desc(p,k) &= -\zeta_{p,k} + \sum_{r \in \Rdesc} s_j(o_r) \cdot s_j(d_r) \cdot \iota_{p,k}^{r} - \sum_{h \in \Hset} \left( \Right{h}{j} \cdot \lambda_{p,k}^{h} + \Left{h}{j} \cdot \mu_{p,k}^{h} \right) - l_j\cdot\nu_k \label{eq:red-cost-desc}.
\end{align}
We want to identify stopping patterns of \emph{positive} reduced costs. If $f_{\text{rc}}^\dir \leq 0$, there does not exist an improving stopping pattern; if this is the case for all $k \in \Kset$ and $p \in \Pset$, the column generation terminates.

The first term, $-\zeta_{p,k}$ is unrestricted in sign and depends only on the considered position $p$ and vehicle $k$. It can be interpreted as the opportunity costs for changing the current assignment of a stopping pattern to this position. 
The subsequent term represents a sum of rewards ($\iota_{p,k}^{r}\ge 0$ for all $r \in R, p \in \Pset, k \in \Kset$) for every request $r$ whose origin and destination stop $o_r$ and $d_r$ are visited by stopping pattern $s_j$.
The third term constitutes of a penalty (both $\lambda_{p,k}^{h}$ and $\mu_{p,k}^{h}$ are nonnegative) for choosing the lowest and highest station of the chosen stopping pattern. 
The last term $-l_j\cdot\nu_k$ with $\nu_k\ge 0$ subtracts a penalty for driven distance.
From the respective dual constraints (see~\eqref{eq:dp:11} and~\eqref{eq:dp:12} in the Appendix), we see that without symmetry-breaking constraints~\eqref{eq:master-milp:14} (or~\eqref{eq:rrmp:14-A}), $\nu_k$ could be increased up to $w_{\text{dist}}$.

\paragraph{Formulating the Pricing Problem}
The pricing problem for a given vehicle $k$ and position $p$ can thus be understood as the problem of determining one stopping pattern to be used as a subline in the direction $\dir$ of $p$, such that the reward for the requests in $\Rdir$ whose origins and destinations are visited, minus the penalty for the corresponding start and end stations and the distance driven, exceeds the opportunity costs of this combination of $(p,k)$. 
This can be interpreted as a variant of the \Ppatternuncap problem: the rewards for each request $r \in \Rdir$ are given by the dual variables $\iota_{p,k}^r$, whereas the penalties can be encoded in the arc weights of the constructed acyclic tournament digraph. We again describe the ascending case here, the descending case follows analogously. The arc weights $w_{g,h} \geq 0$ of the tournament digraph with nodes $\Vset$ and arcs $\set{(g,h) \in \Vset \times \Vset : g < h}$ are defined as
\begin{equation*}
    w_{g,h} = \begin{cases}
        \lambda_{p,k}^h &\text{ if } g = 0, h \in \Hset, \\
        \mu_{p,k}^h &\text{ if } g \in \Hset, h = \sizeH + 1, \\
        M &\text{ if } g = 0, h = n+1,\\
        t_{g,h} \cdot \nu_k &\text{ else},
    \end{cases}
\end{equation*}
with a large positive constant $M$ to forbid the arc $(0, n+1)$, ensuring the stopping pattern is feasible. 
The reward $\iota_r$ for visiting both the origin and destination of a request $r \in \Rasc$ is set to $\iota^r_{p,k}$. If the optimal profit found when solving the \Ppatternuncap problem is at least $\zeta_{p,k}$ and the length is positive, this stopping pattern is considered \emph{profitable}.

The MILP formulation of our pricing problem for a given position $p$ of vehicle $k$ is 
\begin{maxi}[2]
    {}{\sum_{r \in \Rasc} x_r \cdot \iota^r_{p,k} - \sum_{h \in \Hset} \left( \lambda_{p,k}^h \cdot y_{0,h} + \mu_{p,k}^h \cdot y_{h, \sizeH+1}\right) - \nu_k \cdot \sum_{\substack{g,h \in \Vset:\\ g < h}}t_{g,h} \cdot  y_{g,h}}{\label{eq:pricing-prob}}{}
    \addConstraint{\eqref{eq:ilp-pricing-cap:entering}-\eqref{eq:ilp-pricing-cap:reward2}, \eqref{eq:ilp-pricing-cap:var-y}}{}{}
    \addConstraint{\sum_{\substack{g,h \in \Vset:\\ 0 < g < h < \sizeH+1}} y_{g,h}}{\geq 1}{}
    \addConstraint{x_r}{\geq 0}{\quad \forall r \in \Rasc}
\end{maxi}

The additional constraint ensures that the corresponding stopping pattern is not a single-stop pattern, since at least one edge between two stations in $\Vset \setminus \set{0, \sizeH+1}$ needs to be included.

We solve the pricing problem for every $p \in \Pset, k \in \Kset$ and, at the end, add the indices of the most profitable stopping patterns to $\Jset'$ and solve (DP($\Jset'$)). 
If no profitable stopping pattern is found, the column generation terminates.

\subsection{Branch-and-Price Algorithm}\label{sec:bap:scheme}
The described column generation approach terminates with an optimal solution of (LP). However, in general, this solution is not integer. Therefore, we further develop a branch-and-price algorithm for the \lipdp, which is outlined in \Cref{fig:workflow}.

\begin{figure}[ht!]
    \centering
    \includegraphics[width=0.9\linewidth]{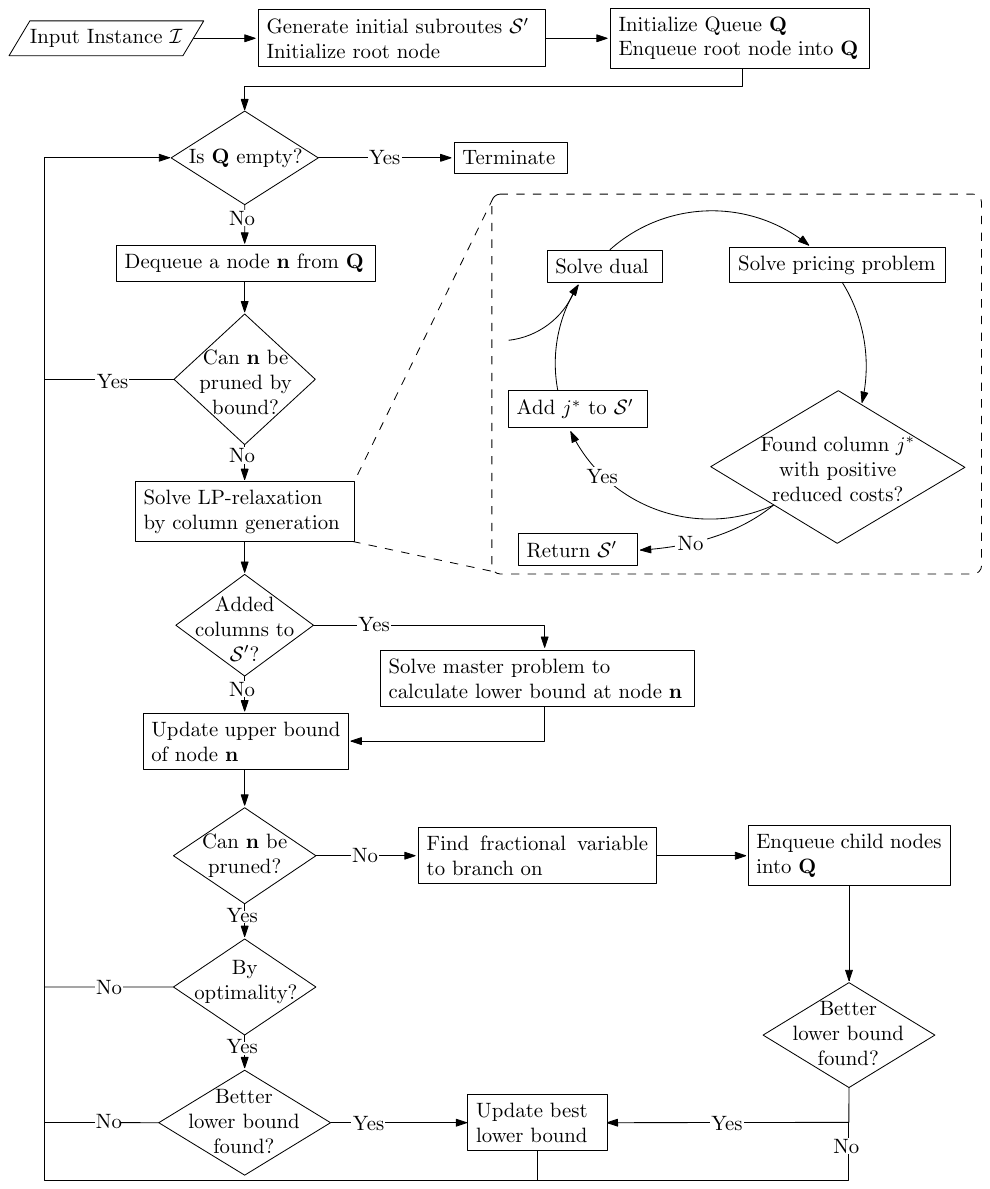}
    \caption{Flow of the Branch-and-Price Algorithm for the \lipdp.}
    \label{fig:workflow}
\end{figure}

The algorithm starts by initializing a root node with a non-empty set of initial stopping patterns $\Sset'$ that includes all single-stop patterns, as motivated in Section~\ref{sec:bap:cg}. We obtain an upper bound by solving (RRMP($\Jset'$)) and a lower bound by solving the restricted master problem with $\Jset'$, which gives us a feasible solution. The priority queue $\mathbf{Q}$, which is sorted by the nodes' parent upper bounds in non-ascending order, is initialized with the root node.

Throughout the procedure, we save the best integer solution found so far and use its objective value as a global lower bound in order to prune nodes. 

We track a single set $\Sset'$ of stopping patterns, starting with the initial ones, and extended it in each node by the profitable patterns found in our column generation.

While $\mathbf{Q}$ is non-empty, we repeatedly dequeue nodes and execute the following steps: first, we check if we can prune the selected node by bound, i.e., if its parent upper bound is less than or equal to the best lower bound. If not, we execute the column generation, extending the set of stopping patterns $\Sset'$ until no further improving pattern has been found. 
We obtain the node's upper bound by solving (RRMP($\Jset'$)) and, if new stopping patterns were added, additionally calculate an incumbent solution (lower bound) by solving (MP($\Jset'$)). 
Next, we enter the branch-and-bound phase: we check if the node can be pruned by bound, by infeasibility, or by optimality. 
If, at this point, we find a better lower bound, we update the global lower bound. Finally, if the node cannot be pruned, we branch and add the child nodes to $\mathbf{Q}$.

We consider a simple branching strategy with hierarchical branching based on the solution of (RRMP($\Jset'$)) in the current node. First, we try to branch on the $x$-variables, and, if none are fractional, we proceed with the $y$-variables. For both, we select the most fractional variable (that is, the variable with fractional part closest to $0.5$) to branch on. We create two branches, where the left branch constraints the selected variable to be equal to $0$ and the right branch constraints it to be equal to $1$. 
The resulting child nodes inherit the parent bounds and are enqueued to $\mathbf{Q}$. At the end, we return the node with the largest lower bound and the final set of stopping patterns $\mathcal{S}'$.

\section{Computational Experiments}\label{sec:experiments}
We present extensive computational experiments to evaluate our proposed branch-and-price algorithm for the \lipdp. We start by giving details on the benchmark data used (\Cref{sec:experiments:data}). In \Cref{sec:experiments:models}, we evaluate the branch-and-price algorithm in comparison to the state-of-the-art solution method for the \Plidarp. Following this, we derive a root node heuristic to quickly generate solutions (\Cref{sec:experiments:rootnodeheuristic}), for which we examine both the the impact of instance characteristics (\Cref{sec:experiments:charactistics}) and the impact of our master problem's configuration and modeling (\Cref{sec:experiments:config}).

Unless stated otherwise, the number of positions per vehicle is set to $2\,\sizeR$ and the starting set of stopping patterns $\Sset'$ consists of the stopping pattern which stops at every station in $\Hset$ and the set of all single-stop patterns. We solve the MILP~\eqref{eq:pricing-prob} as our pricing problem. The objective function weights are set to $\wpax = 10$ and $\wdist = 1$. 
All runtime experiments were repeated five times, reporting the average values here.

To compare to a state-of-the-art method, we use the aggregated location-augmented-event-based (ALAEB) MILP formulation introduced by~\cite{gaul_event-based_2022}, which we have adjusted for the line-based topology of the \lipdp by modifying the underlying event-based graph following the ideas outlined in~\cite{reiter_line-based_2024}. To practically disable the time windows in the model, all request time windows are set as large as possible (from 0 to the maximum time), and both the maximum waiting and the maximum travel time are set to the maximum time.

The code\footnote{\url{https://github.com/ReiterKM/solving-the-lidarp-by-generating-stopping-patterns}} for our implementation of the branch-and-price algorithm is written in Julia~1.11.6 and uses Gurobi~12.0.3. We ran all experiments on the high performance computing cluster ``Julia~2'' at the University of Würzburg, on a partition equipped with two AMD Epyc 7543 CPUs (64~physical cores, 128~logical processors) and 512~GB~RAM.

\subsection{Data}\label{sec:experiments:data}
All experiments are run on synthetically generated instances. We generate instances by fixing a number of bus stations and a number of requests: each request's origin and destination is sampled uniformly at random from $\Hset$, ensuring that they are distinct. For each instance size, we generate five different instances. We generate random metric distances between all stations.

The instance names encode the instance size in the format \texttt{<number of vehicles>-<number of requests>-<version>}. The version is a letter to distinguish the different instances of the same size. There is no correlation between instances ending with the same letter.

\subsection{Comparison of Branch-and-Price with the State-of-the-Art}\label{sec:experiments:models}
In our first experiment, we compare our branch-and-price algorithm to the state-of-the-art for the \Plidarp. For both approaches, we set a timeout of 3600 seconds. The seven instance sizes vary from 30 to 60 requests, in steps of five, with a single vehicle of capacity six on a line with 10 stations. 

In~\Cref{fig:model-comparison}, we plot the best incumbent and best (upper bound) found by both the ALAEB model and the branch-and-price algorithm, divided into four subplots: after 300, 900, 1800, and 3600 seconds of runtime. The same values are reported in~\Cref{tab:models:ub} in the Appendix. Note that even where values are reported, the models have not necessarily terminated with optimality.

\begin{figure}[htb!]
    \centering
    \captionsetup[subfigure]{aboveskip=-3pt,belowskip=-2pt}
    \begin{subfigure}{.48\linewidth}
        \begin{tikzpicture}
    \begin{axis}[
        table/col sep=comma,
        width=7.5cm, height=3.5cm,
        symbolic x coords={30-A, 30-B, 30-C, 30-D, 30-E, 35-A, 35-B, 35-C, 35-D, 35-E, 40-A, 40-B, 40-C, 40-D, 40-E, 45-A, 45-B, 45-C, 45-D, 45-E, 50-A, 50-B, 50-C, 50-D, 50-E, 55-A, 55-B, 55-C, 55-D, 55-E, 60-A, 60-B, 60-C, 60-D, 60-E},
        cycle list name=scatter-marks-2-plots-light-dark,
        legend style={at={(0,1.1)},anchor=west,nodes={scale=0.8, transform shape}, legend columns=4,/tikz/every even column/.append style={column sep=.2em}},
        legend cell align=left,
        xlabel={},
        ylabel={Bounds},
        xticklabels = {0, 30, 35, 40, 45, 50, 55, 60},
        xtick style={draw=none},
        xticklabel style={rotate=0,xshift=1.2em},
        ymin=-150,
        ymax=800,
        minor y tick num=1,
        clip mode=individual,
        ]
        \addplot+[only marks,on layer=m1] table[x=Instance,y=m5] {data/01_state_of_the_art/eb_bound.dat}; \label{fig:model-comparison:eb_bound}
        \addlegendentry{};
        \addplot+[only marks,on layer=m3] table[x=Instance,y=m5] {data/01_state_of_the_art/eb_incumbent.dat}; \label{fig:model-comparison:eb_incumbent}
        \addlegendentry{ALAEB};
        \addplot+[only marks,on layer=m2] table[x=Instance,y=m5] {data/01_state_of_the_art/bnp_bound.dat}; \label{fig:model-comparison:bnp_bound}
        \addlegendentry{};
        \addplot+[only marks,on layer=m4] table[x=Instance,y=m5]  {data/01_state_of_the_art/bnp_incumbent.dat}; \label{fig:model-comparison:bnp_incumbent}
        \addplot+[only marks, fill opacity = 0, draw opacity = 0] table[x=Instance,y=m60] {data/01_state_of_the_art/bnp_bound.dat};
        \addlegendentry{Branch-and-Price};
        \filldraw [fill=PKlightgray!40!white,draw opacity=0, fill opacity=0.5] (rel axis cs:0.154,0) rectangle (rel axis cs:0.293,1);
        \filldraw [fill=PKlightgray!40!white,draw opacity=0, fill opacity=0.5] (rel axis cs:0.43,0) rectangle (rel axis cs:0.57,1);
        \filldraw [fill=PKlightgray!40!white,draw opacity=0, fill opacity=0.5] (rel axis cs:0.708,0) rectangle (rel axis cs:0.848,1);
         \draw[PKlightgray]  (rel axis cs: 0,0.11) -- (rel axis cs:1,0.11);
         \node at (rel axis cs: 0, 0.05) [anchor=west] {missing};
    \end{axis}
\end{tikzpicture}
        \captionsetup{margin={1.65cm,0cm}}
        \subcaption{300 seconds}
    \end{subfigure}
    \hfill
    \begin{subfigure}{.48\linewidth}
        \begin{tikzpicture}
    \begin{axis}[
        table/col sep=comma,
        width=7.5cm, height=3.5cm,
        symbolic x coords={30-A, 30-B, 30-C, 30-D, 30-E, 35-A, 35-B, 35-C, 35-D, 35-E, 40-A, 40-B, 40-C, 40-D, 40-E, 45-A, 45-B, 45-C, 45-D, 45-E, 50-A, 50-B, 50-C, 50-D, 50-E, 55-A, 55-B, 55-C, 55-D, 55-E, 60-A, 60-B, 60-C, 60-D, 60-E},
        cycle list name=scatter-marks-2-plots-light-dark,
        xticklabel style={rotate=0,xshift=1.2em},
        xlabel={},
        ylabel={},
        xticklabels = {0, 30, 35, 40, 45, 50, 55, 60},
        xtick style={draw=none},
        yticklabels={},
        ymin=-150,
        ymax=800,
        minor y tick num=1,
        clip mode=individual
        ]
        \addplot+[only marks,on layer=m1] table[x=Instance,y=m15] {data/01_state_of_the_art/eb_bound.dat};
        \addplot+[only marks,on layer=m3] table[x=Instance,y=m15] {data/01_state_of_the_art/eb_incumbent.dat};
        \addplot+[only marks,on layer=m2] table[x=Instance,y=m15] {data/01_state_of_the_art/bnp_bound.dat};
        \addplot+[only marks,on layer=m4] table[x=Instance,y=m15] {data/01_state_of_the_art/bnp_incumbent.dat};
        \addplot+[only marks, fill opacity = 0, draw opacity = 0] table[x=Instance,y=m60] {data/01_state_of_the_art/bnp_bound.dat};
        \filldraw [fill=PKlightgray!40!white,draw opacity=0, fill opacity=0.5] (rel axis cs:0.154,0) rectangle (rel axis cs:0.293,1);
        \filldraw [fill=PKlightgray!40!white,draw opacity=0, fill opacity=0.5] (rel axis cs:0.43,0) rectangle (rel axis cs:0.57,1);
        \filldraw [fill=PKlightgray!40!white,draw opacity=0, fill opacity=0.5] (rel axis cs:0.708,0) rectangle (rel axis cs:0.848,1);
         \draw[PKlightgray]  (rel axis cs: 0,0.11) -- (rel axis cs:1,0.11);
         \node at (rel axis cs: 0, 0.05) [anchor=west] {missing};
    \end{axis}
\end{tikzpicture}
        \subcaption{900 seconds}
    \end{subfigure}
    \\
    \begin{subfigure}{.48\linewidth}
        \begin{tikzpicture}
    \begin{axis}[
        table/col sep=comma,
        width=7.5cm, height=3.5cm,
        symbolic x coords={30-A, 30-B, 30-C, 30-D, 30-E, 35-A, 35-B, 35-C, 35-D, 35-E, 40-A, 40-B, 40-C, 40-D, 40-E, 45-A, 45-B, 45-C, 45-D, 45-E, 50-A, 50-B, 50-C, 50-D, 50-E, 55-A, 55-B, 55-C, 55-D, 55-E, 60-A, 60-B, 60-C, 60-D, 60-E},
        xticklabel style={rotate=0,xshift=1.2em},
        cycle list name=scatter-marks-2-plots-light-dark,
        legend cell align=left,
        xlabel={Number of Requests},
        ylabel={Bounds},
        xticklabels = {0, 30, 35, 40, 45, 50, 55, 60},
        xtick style={draw=none},
        ymin=-150,
        ymax=800,
        minor y tick num=1,
        clip mode=individual
        ]
        \addplot+[only marks,on layer=m1] table[x=Instance,y=m30] {data/01_state_of_the_art/eb_bound.dat};
        \addplot+[only marks,on layer=m3] table[x=Instance,y=m30] {data/01_state_of_the_art/eb_incumbent.dat};
        \addplot+[only marks,on layer=m2] table[x=Instance,y=m30] {data/01_state_of_the_art/bnp_bound.dat};
        \addplot+[only marks,on layer=m4] table[x=Instance,y=m30] {data/01_state_of_the_art/bnp_incumbent.dat};
        \addplot+[only marks, fill opacity = 0, draw opacity = 0] table[x=Instance,y=m60] {data/01_state_of_the_art/bnp_bound.dat};
        \filldraw [fill=PKlightgray!40!white,draw opacity=0, fill opacity=0.5] (rel axis cs:0.154,0) rectangle (rel axis cs:0.293,1);
        \filldraw [fill=PKlightgray!40!white,draw opacity=0, fill opacity=0.5] (rel axis cs:0.43,0) rectangle (rel axis cs:0.57,1);
        \filldraw [fill=PKlightgray!40!white,draw opacity=0, fill opacity=0.5] (rel axis cs:0.708,0) rectangle (rel axis cs:0.848,1);
         \draw[PKlightgray]  (rel axis cs: 0,0.11) -- (rel axis cs:1,0.11);
         \node at (rel axis cs: 0, 0.05) [anchor=west] {missing};
    \end{axis}
\end{tikzpicture}
        \captionsetup{margin={1.7cm,0cm}}
        \subcaption{1800 seconds}
    \end{subfigure}
    \hfill
    \begin{subfigure}{.48\linewidth}
        \begin{tikzpicture}
    \begin{axis}[
        table/col sep=comma,
        width=7.5cm, height=3.5cm,
        symbolic x coords={30-A, 30-B, 30-C, 30-D, 30-E, 35-A, 35-B, 35-C, 35-D, 35-E, 40-A, 40-B, 40-C, 40-D, 40-E, 45-A, 45-B, 45-C, 45-D, 45-E, 50-A, 50-B, 50-C, 50-D, 50-E, 55-A, 55-B, 55-C, 55-D, 55-E, 60-A, 60-B, 60-C, 60-D, 60-E},
        xticklabel style={rotate=0,yshift=0em,xshift=1.2em},
        cycle list name=scatter-marks-2-plots-light-dark,
        xlabel={Number of Requests},
        ylabel={},
        yticklabels={},
        xticklabels = {, 30, 35, 40, 45, 50, 55, 60},
        xtick style={draw=none},
        ymin=-150,
        ymax=800,
        minor y tick num=1,
        clip mode=individual
        ]
        \addplot+[only marks,on layer=m1] table[x=Instance,y=m60] {data/01_state_of_the_art/eb_bound.dat};
        \addplot+[only marks,on layer=m3] table[x=Instance,y=m60] {data/01_state_of_the_art/eb_incumbent.dat};
        \addplot+[only marks,on layer=m2] table[x=Instance,y=m60] {data/01_state_of_the_art/bnp_bound.dat};
        \addplot+[only marks,on layer=m4] table[x=Instance,y=m60] {data/01_state_of_the_art/bnp_incumbent.dat};
        \filldraw [fill=PKlightgray!40!white,draw opacity=0, fill opacity=0.5] (rel axis cs:0.154,0) rectangle (rel axis cs:0.293,1);
        \filldraw [fill=PKlightgray!40!white,draw opacity=0, fill opacity=0.5] (rel axis cs:0.43,0) rectangle (rel axis cs:0.57,1);
        \filldraw [fill=PKlightgray!40!white,draw opacity=0, fill opacity=0.5] (rel axis cs:0.708,0) rectangle (rel axis cs:0.848,1);
         \draw[PKlightgray]  (rel axis cs: 0,0.11) -- (rel axis cs:1,0.11);
         \node at (rel axis cs: 0, 0.05) [anchor=west] {missing};
    \end{axis}
\end{tikzpicture}
        \subcaption{3600 seconds}
    \end{subfigure}
    \caption{Best bound (\ref*{fig:model-comparison:eb_bound}~\ref*{fig:model-comparison:bnp_bound}) 
    and best incumbent (\ref*{fig:model-comparison:eb_incumbent}~\ref*{fig:model-comparison:bnp_incumbent}) 
    found for the ALAEB model and branch-and-price algorithm, after a given number of seconds of runtime. On the bottom of each figure, the instances for which no incumbent could be found with the respective approach are indicated.
    }
    \label{fig:model-comparison}
\end{figure}
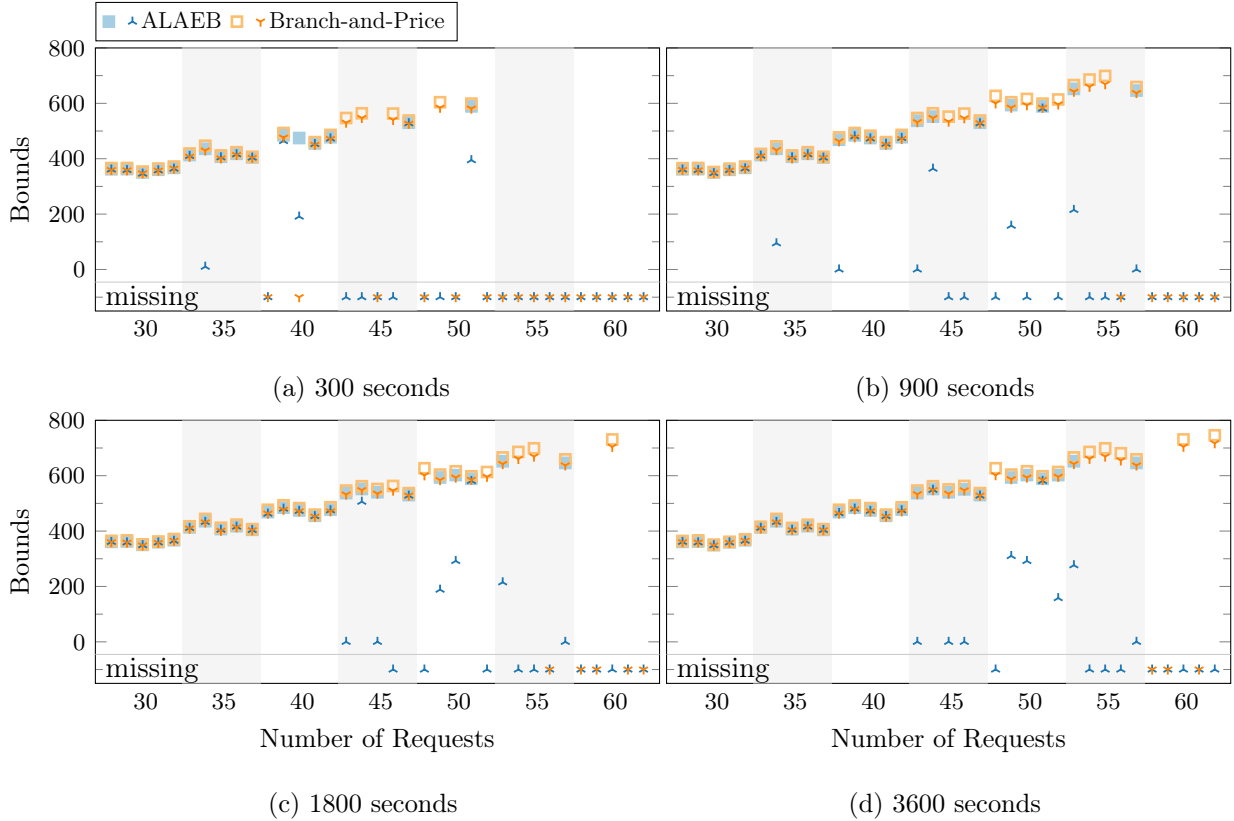

In every instance where the ALAEB model found an upper bound, the bound is lower than the one found by our branch-and-price algorithm. However, the ALAEB model struggles with finding incumbent solutions, producing a trivial solution of $0$ for four instances and no solution for nine instances after an hour of runtime. The branch-and-price algorithm found an incumbent solution for 29 instances (83\%) after already 900 seconds, and for 32 instances (91\%) after the full 3600 seconds, compared to 22 instances (63\%) and 26 instances (74\%) for the ALAEB model, respectively. 
We observe that the ALAEB model is able to quickly solve instances to optimality with up to, and including, 40 requests, whilst it could only find bounds for two instances with 55 requests, and none for instances with 60 requests, even after 3600 seconds of runtime.

Considering the MIP gap, which can be seen in the vertical gap between bound and incumbent per instance in \Cref{fig:model-comparison}, we observe that, in cases where the ALAEB model did not find the optimal solution, the MIP gap of the branch-and-price solutions is significantly smaller. The average MIP gap (of the branch-and-price solution) was $2.79\%$, with the smallest gap at $0.87\%$ for instance 1-30-C and the largest gap at $4.48\%$ for instance 1-55-C.
The virtue of the branch-and-price algorithm lies in its ability to quickly find both upper and incumbent solutions: while these may not be tight enough to prove optimality, for our instances, the MIP gap was always below $5\%$ when an incumbent was~found.

\subsubsection{Computational Time Distribution in Branch-and-Price}
Analyzing the proportionality of the computational times required for column generation and for finding an incumbent solution (i.e., solving the restricted master problem), we observe that the incumbent calculations generally dominate the runtime. In the root node, where few high-quality columns are known, the column generation accounts for an average of 30\% of the time for small instances (less than 40 requests), while the incumbent calculation requires 46\%. For larger instances, column generation averages only 4\% of runtime, compared to 93\% for the incumbent. This effect is even stronger when considering all iterations of the branch-and-price algorithm: for instances with at least 40 requests, on average, 1\% of time is spent in column generation and 98\% on calculating the incumbent. These findings are consistent with other branch-and-bound studies and indicate that, while our pricing problem is formulated as an ILP, it is not a computational bottleneck of our solution method.

The trivial solution with objective value $0$, where no vehicle drives and no request is transported, is feasible if $S'$ contains at least one single-stop pattern. There are instances where the branch-and-price algorithm did not find this trivial solution, which mans that the procedure did not solve the root node. Note, however, that for these instances, the state-of-the-art approach used for comparison did not find any feasible solution as well.

\subsection{Root Node Heuristic Method}\label{sec:experiments:rootnodeheuristic}
As observed in \Cref{sec:experiments:models}, the branch-and-price algorithm's strength lies in finding incumbent solutions much earlier than the state-of-the-art approach that we compared to, especially for instances with many requests.
Hence, we propose a root node heuristic, particularly suited for real-life applications where tight-enough bounds, and close-enough solutions, are accepted in the trade-off for a much shorter runtime.

In this heuristic, we solve only the root node with column generation and do not construct a branch-and-bound tree. We report the best bound and the best incumbent found from solving the restricted master problem on the subset of columns $\Sset'$ found in our column generation. Further, we allow at most 900 seconds of runtime, as we have found this to be sufficient following the analysis in \Cref{sec:experiments:models}.

We evaluate the heuristic under two aspects: (i) the different instance characteristics and their performance impact (\Cref{sec:experiments:charactistics}), and (ii) the configuration and modeling choices taken (\Cref{sec:experiments:config}). 
For all experiments discussed hereon, we use 15 instance sizes with 20 to 100 requests (in steps of 10), where the number of vehicles (between 1 and 5) scales with the number of requests to keep a comparable ratio of requests to available capacity. As before, we generate five different instances of the same size (and assign each a different \texttt{version}), resulting in a total of 75 instances.

\subsubsection{Instance Characteristics}\label{sec:experiments:charactistics}
We now focus on the impact of different instance characteristics on the root node heuristic. In the results presented in \Cref{sec:experiments:models}, we observe the impact of the number of requests: as these increase, the branch-and-price algorithm takes longer to find solutions. In the following experiments, we vary the number of stations of the line and the capacity of the vehicles.

To obtain a baseline, we try to compute the optimal solution to all instances using the ALAEB model with a timeout of 7200 seconds. As noted in \Cref{sec:experiments:models}, the ALAEB model is only able to solve small instances to optimality, in this case with up to 40 requests, thus limiting the amount of instances we can compare to in this section. Our root node heuristic was able to find feasible solutions to all instances, with up to 100 requests. We examine the optimality gap of the best-found incumbent from our root node heuristic to this optimal solution. In each table, we only include rows where at least one column contains an optimal objective value obtained from the ALAEB model; all other rows are omitted. 

\paragraph{The Impact of the Number of Stations}\label{sec:experiments:charactistics:stations} 
In our master problem, the number of $\Sstart{h}{p,k}$ and $\Send{h}{p,k}$ variables, which denote the lowest and highest station of each position, are dependent on the length of the line. More importantly, the number of possible stopping patterns grows exponentially in the number of stations, and the size of the pricing problem increases with the number of stations.
We evaluate instances with 10, 15, 20, and 25 stations. 
Note that instances of the same number of requests are independently generated for the different numbers of stations.

\Cref{tab:stations:incumbent} reports, for each number of stations, the incumbent solution value of our root node heuristic, the optimal solution value, and the optimality gap between the two.
The largest optimality gap of 6.25\% occurred in instance 1-20-D on 20 stations. In three instances, each with 20 requests (but different numbers of stations), the root node heuristic found the optimal solution.

As expected, the performance of the heuristic appears to decrease with the number of stations: our heuristic was able to solve 85\% of the instances with 10 stations (with an average optimality gap of -0.95\%), compared to 73\% with 15 stations (average gap -1.42\%), 69\% with 20 stations (average gap -2.06\%), and only 67\% with 25 stations (average gap -2.38\%). In case of 10 stations, the largest solved instance had 100 requests and 5 vehicles, compared to a maximum size of 80 requests and 5 vehicles for 25 stations. Up to a size of 50 requests and 3 vehicles, our heuristic solved all instances, for all number of stations.
Especially compared to the ALAEB model, which struggled with instances of 40 requests for all number of stations, this shows a greater usability of our heuristic, without sacrificing solution quality.

\begin{table}[H]
    \centering
    \caption{Incumbent objective value of the root node (RN) heuristic, optimal objective value (OV), and optimality gap for different number of stations. A dash indicates a missing value. Rows where no optimal solution was found for any configuration are omitted.
    }
    \label{tab:stations:incumbent}
    \footnotesize
    \begin{tabular}{l ccc ccc ccc ccc}
        \toprule
        Instance & \multicolumn{12}{c}{Number of Stations}\\\cmidrule(lr){2-13}
         & \multicolumn{3}{c}{10} & \multicolumn{3}{c}{15} & \multicolumn{3}{c}{20} & \multicolumn{3}{c}{25}\\\cmidrule(lr){2-4}\cmidrule(lr){5-7}\cmidrule(lr){8-10}\cmidrule(lr){11-13}
         & RN & OV & Gap & RN & OV & Gap & RN & OV & Gap & RN & OV & Gap \\
        \midrule
        1-20-A & 227 & - & - & 226 & 228 & -0.88\% & 260 & 264 & -1.52\% & 270 & 278 & -2.88\%\\
        1-20-B & 217 & 218 & -0.46\% & 225 & 229 & -1.75\% & 219 & 219 & \phantom{-}0.00\% & 241 & 247 & -2.43\%\\
        1-20-C & 237 & 241 & -1.66\% & 231 & 235 & -1.70\% & 252 & 262 & -3.82\% & 272 & 284 & -4.23\%\\
        1-20-D & 240 & 242 & -0.83\% & 244 & 246 & -0.81\% & 255 & 272 & -6.25\% & 242 & 247 & -2.02\%\\
        1-20-E & 225 & 225 & \phantom{-}0.00\% & 233 & 239 & -2.51\% & 241 & 247 & -2.43\% & 247 & 247 & \phantom{-}0.00\%\\
        \rowcolor{PKlightgray!50}1-30-A & 354 & 361 & -1.94\% & 357 & - & - & 368 & 370 & -0.54\% & 387 & 393 & -1.53\%\\
        \rowcolor{PKlightgray!50}1-30-B & 356 & - & - & 362 & 365 & -0.82\% & 383 & 391 & -2.05\% & 365 & 374 & -2.41\%\\
        \rowcolor{PKlightgray!50}1-30-C & 343 & 349 & -1.72\% & 379 & - & - & 392 & - & - & 393 & 400 & -1.75\%\\
        \rowcolor{PKlightgray!50}1-30-D & 355 & 359 & -1.11\% & 360 & - & - & 398 & - & - & 382 & 393 & -2.80\%\\
        \rowcolor{PKlightgray!50}1-30-E & 361 & - & - & 382 & - & - & 390 & 399 & -2.26\% & 384 & 396 & -3.03\%\\
        2-30-A & 348 & 349 & -0.29\% & 361 & - & - & 410 & 420 & -2.38\% & 390 & - & -\\
        2-30-B & 352 & - & - & 352 & 361 & -2.49\% & 385 & 391 & -1.53\% & 390 & 403 & -3.23\%\\
        2-30-C & 349 & 351 & -0.57\% & 377 & 382 & -1.31\% & 393 & 398 & -1.26\% & 406 & 415 & -2.17\%\\
        2-30-D & 351 & - & - & 381 & 383 & -0.52\% & 381 & - & - & 394 & 406 & -2.96\%\\
        2-30-E & 351 & - & - & 356 & - & - & 388 & 395 & -1.77\% & 386 & 394 & -2.03\%\\
        \rowcolor{PKlightgray!50}1-40-A & 462 & - & - & 514 & - & - & 535 & - & - & 526 & 534 & -1.50\%\\
        2-40-A & 481 & - & - & 477 & 484 & -1.45\% & 497 & - & - & 525 & - & -\\
        2-40-D & 447 & - & - & 501 & - & - & 519 & - & - & 511 & 530 & -3.58\%\\
        2-40-E & 472 & - & - & 511 & - & - & 518 & 523 & -0.96\% & 545 & 556 & -1.98\%\\
        \bottomrule
    \end{tabular}
\end{table}

\paragraph{The Impact of the Vehicle Capacity}\label{sec:experiments:charactistics:capacity}
We vary the vehicle capacity, keeping the number of available vehicles per instance fixed. \Cref{tab:capacity:incumbent} reports the incumbent objective value of the root node heuristic, the optimal objective value, and the optimality gap (where an optimal solution is available), for different capacities. Each instance (row) contains exactly the same requests for different capacities.

The performance of our root node heuristic improves with increasing capacity, as demonstrated by the decrease in optimality gap from -2.94\% for 3 seats to -0.20\% for 12 seats (where an optimal solution is known). For every instance, increasing the capacity decreases the optimality gap. The largest optimality gap of -6.93\% occurs for instance 1-20-D and a capacity of 3, decreasing to the smallest gap of -1.21\% for a capacity of 12 in the same instance.

\begin{table}[H]
    \centering
    \caption{Incumbent objective value of the root node (RN) heuristic, optimal objective value (OV), and optimality gap for different capacities. A dash indicates a missing value. Rows where no optimal solution was found for any configuration are omitted.
    }
    \label{tab:capacity:incumbent}
    \footnotesize
    \begin{tabular}{l ccc ccc ccc ccc}
        \toprule
        Instance & \multicolumn{12}{c}{Capacity}\\\cmidrule(lr){2-13}
         & \multicolumn{3}{c}{3} & \multicolumn{3}{c}{6} & \multicolumn{3}{c}{9} & \multicolumn{3}{c}{12}\\\cmidrule(lr){2-4}\cmidrule(lr){5-7}\cmidrule(lr){8-10}\cmidrule(lr){11-13}
         & RN & OV & Gap & RN & OV & Gap & RN & OV & Gap & RN & OV & Gap \\
        \midrule
        1-20-A & 215 & 222 & -3.15\% & 227 & - & - & 231 & - & - & 235 & 235 & \phantom{-}0.00\%\\
        1-20-B & 212 & 213 & -0.47\% & 217 & 218 & -0.46\% & 219 & 219 & \phantom{-}0.00\% & 219 & 219 & \phantom{-}0.00\%\\
        1-20-C & 228 & 233 & -2.15\% & 237 & 241 & -1.66\% & 244 & 246 & -0.81\% & 244 & 246 & -0.81\%\\
        1-20-D & 215 & 231 & -6.93\% & 240 & 242 & -0.83\% & 241 & 245 & -1.63\% & 244 & 247 & -1.21\%\\
        1-20-E & 215 & 219 & -1.83\% & 225 & 225 & \phantom{-}0.00\% & 225 & 225 & \phantom{-}0.00\% & 225 & 225 & \phantom{-}0.00\%\\
        \rowcolor{PKlightgray!50}1-30-A & 331 & 348 & -4.89\% & 354 & 361 & -1.94\% & 363 & - & - & 367 & 367 & \phantom{-}0.00\%\\
        \rowcolor{PKlightgray!50}1-30-B & 334 & 348 & -4.02\% & 356 & - & - & 362 & 365 & -0.82\% & 368 & 368 & \phantom{-}0.00\%\\
        \rowcolor{PKlightgray!50}1-30-C & 322 & 335 & -3.88\% & 343 & 349 & -1.72\% & 350 & - & - & 352 & 354 & -0.56\%\\
        \rowcolor{PKlightgray!50}1-30-D & 336 & 345 & -2.61\% & 355 & 359 & -1.11\% & 359 & 362 & -0.83\% & 365 & 365 & \phantom{-}0.00\%\\
        \rowcolor{PKlightgray!50}1-30-E & 341 & 351 & -2.85\% & 361 & - & - & 366 & - & - & 373 & 373 & \phantom{-}0.00\%\\
        2-30-A & 334 & 339 & -1.47\% & 348 & 349 & -0.29\% & 350 & 351 & -0.28\% & 353 & 353 & \phantom{-}0.00\%\\
        2-30-B & 332 & 346 & -4.05\% & 352 & - & - & 361 & - & - & 362 & - & -\\
        2-30-C & 334 & 339 & -1.47\% & 349 & 351 & -0.57\% & 350 & - & - & 354 & 355 & -0.28\%\\
        2-30-D & - & 342 & - & 351 & - & - & 357 & - & - & 360 & 360 & \phantom{-}0.00\%\\
        2-30-E & 334 & 343 & -2.62\% & 351 & - & - & 356 & - & - & 360 & 360 & \phantom{-}0.00\%\\
        \rowcolor{PKlightgray!50}1-40-A & 440 & 451 & -2.44\% & 462 & - & - & 469 & - & - & 471 & - & -\\
        \rowcolor{PKlightgray!50}1-40-C & 442 & 457 & -3.28\% & 465 & - & - & 475 & - & - & 477 & - & -\\
        \rowcolor{PKlightgray!50}1-40-D & 429 & 444 & -3.38\% & 448 & - & - & 456 & - & - & 459 & - & -\\
        \rowcolor{PKlightgray!50}1-40-E & 450 & 461 & -2.39\% & 471 & - & - & 475 & - & - & 483 & - & -\\
        2-40-C & 455 & 463 & -1.73\% & 477 & - & - & 483 & - & - & 488 & - & -\\
        2-40-D & 428 & 442 & -3.17\% & 447 & - & - & 453 & - & - & 458 & - & -\\
        \bottomrule
    \end{tabular}
\end{table}

For a capacity of 3 seats, our heuristic found a feasible solution in 75\% of instances, compared to 85\% for 6 seats, 88\% for 9 seats, and 87\% for 12 seats. Except for the case with 3 seats, the heuristic found feasible solutions for instances with up to 100 request and 5 vehicles.

These results additionally highlight that our root node heuristic is able to find solutions for much larger instances than the state-of-the-art. 
While this method remains a heuristic, the optimality gap is sufficiently small even for difficult instances to be applicable in real-life instances.

\subsubsection{Configuration}\label{sec:experiments:config}
In this section, we take a closer look at two specific decisions we have taken in modeling our master problem: firstly, we vary the number of positions per vehicle, and secondly, we consider the impact of the symmetry-breaking constraints.

For both experiments, we evaluate only the restricted master problem under varying scenarios. To precompute a promising set of stopping patterns $\Sset'$, we run the column generation in the root node for each instance until it terminates.
This results in an average of 37 patterns (min: 27, max: 49), where the number of stopping patterns seems to be independent of the size of the instance.
Then, we try to solve the restricted master problem associated with $\Sset'$ to optimality within a time limit of 900 seconds, providing a reference solution if successful.

\paragraph{The Impact of the Number of Available Positions}\label{sec:experiments:config:positions}
As shown by~\cite{lauerbach_complexity_2025}, determining the minimal number of positions in an optimal solution of the \Plidarp for a capacity $\Qmax \geq 2$ is strongly \NP-hard. Further, the authors show that the theoretical worst-case lower bound of the number of positions needed to serve all $\sizeR$ requests is $2\, \sizeR$.
Recall that the variables $x_{p,k}^r$, $y_{p,k}^j$, $\Sstart{h}{p,k}$, and $\Send{h}{p,k}$, as well as a majority of constraints in the (restricted) master problem, are indexed by $p \in \Pset$; consequently, its size depends largely on the cardinality of $\Pset$.
We want to investigate (i) how the number of available positions affects the runtime and (ii) the (smallest) number of available positions necessary to retain the objective value obtained from a reference solution as described above, depending on the number of requests. 
In the baseline case, denoted by $100\%$, we allow $2\,\sizeR$ positions per vehicle. In every other case, we reduce the amount of available positions, e.g., the $50\%$ case corresponds to $50\%$ of $2\,\sizeR$, that is, $\sizeR$ positions per vehicle.

\begin{figure}[htb!]
    \centering
    \begin{tikzpicture}
    \begin{axis}[
        table/col sep=comma,
        width=.9\linewidth, height=.3\linewidth,
        symbolic x coords={
        1-20-A,1-20-B,1-20-C,1-20-D,1-20-E,
        1-30-A,1-30-B,1-30-C,1-30-D,1-30-E,
        2-30-A,2-30-B,2-30-C,2-30-D,2-30-E,
        1-40-A,1-40-B,1-40-C,1-40-D,1-40-E,
        2-40-A,2-40-B,2-40-C,2-40-D,2-40-E,
        2-50-A,2-50-B,2-50-C,2-50-D,2-50-E,
        3-50-A,3-50-B,3-50-C,3-50-D,3-50-E,
        3-60-A,3-60-B,3-60-C,3-60-D,3-60-E,
        4-60-A,4-60-B,4-60-C,4-60-D,4-60-E,
        3-70-A,3-70-B,3-70-C,3-70-D,3-70-E,
        4-70-A,4-70-B,4-70-C,4-70-D,4-70-E},
        cycle list name=scatter-marks-6-plots,
        enlargelimits=0.03,
        clip mode=individual,
        xmin=1-20-A,
        xmax=3-60-E,
        xtick distance=5,
        xticklabels = {, 1-20, 1-30, 2-30, 1-40, 2-40, 2-50, 3-50, 3-60, },
        xticklabel style={rotate=0,yshift=0em,xshift=2em},
        xtick style={draw=none},
        ymin = 0,
        ymax=900,
        minor y tick num=1,
        xlabel={Instance},
        ylabel={Time [s]},
        legend style={at={(0,1.07)},anchor=west,nodes={scale=0.8, transform shape}, legend columns=6, /tikz/every even column/.append style={column sep=.2em}, align=left},
        ]
        \addplot+[only marks, on layer=m2] table[x=Instance,y=P10] {data/02_number_of_positions/runtime.dat};
        \addlegendentry{10\%};
        \addplot+[only marks, on layer=m2] table[x=Instance,y=P25] {data/02_number_of_positions/runtime.dat};
        \addlegendentry{25\%};
        \addplot+[only marks, on layer=m2] table[x=Instance,y=P50] {data/02_number_of_positions/runtime.dat};
        \addlegendentry{50\%};
        \addplot+[only marks, on layer=m2] table[x=Instance,y=P75] {data/02_number_of_positions/runtime.dat};
        \addlegendentry{75\%};
        \addplot+[only marks, on layer=m2] table[x=Instance,y=P100] {data/02_number_of_positions/runtime.dat};
        \addlegendentry{100\%};
        \draw[PKlightgray,dashed, on layer=m1]  (rel axis cs: 0,0.972) -- (rel axis cs:1,0.972);
        \filldraw [fill=PKlightgray!40!white,,draw opacity=0, fill opacity=0.5, on layer=m2] (rel axis cs:0.137,0) rectangle (rel axis cs:0.259,1);
        \filldraw [fill=PKlightgray!40!white,,draw opacity=0, fill opacity=0.5, on layer=m2] (rel axis cs:0.379,0) rectangle (rel axis cs:0.501,1);
        \filldraw [fill=PKlightgray!40!white,,draw opacity=0, fill opacity=0.5, on layer=m2] (rel axis cs:0.622,0) rectangle (rel axis cs:0.744,1);
        \filldraw [fill=PKlightgray!40!white,,draw opacity=0, fill opacity=0.5, on layer=m2] (rel axis cs:0.866,0) rectangle (rel axis cs:1,1);
    \end{axis}
\end{tikzpicture}
    \vspace{-2em}
    \caption{Average runtime of the restricted master problem for different number of available positions. The dashed line indicates the timeout at 900 seconds.}
    \label{fig:positions:runtime}
\end{figure}
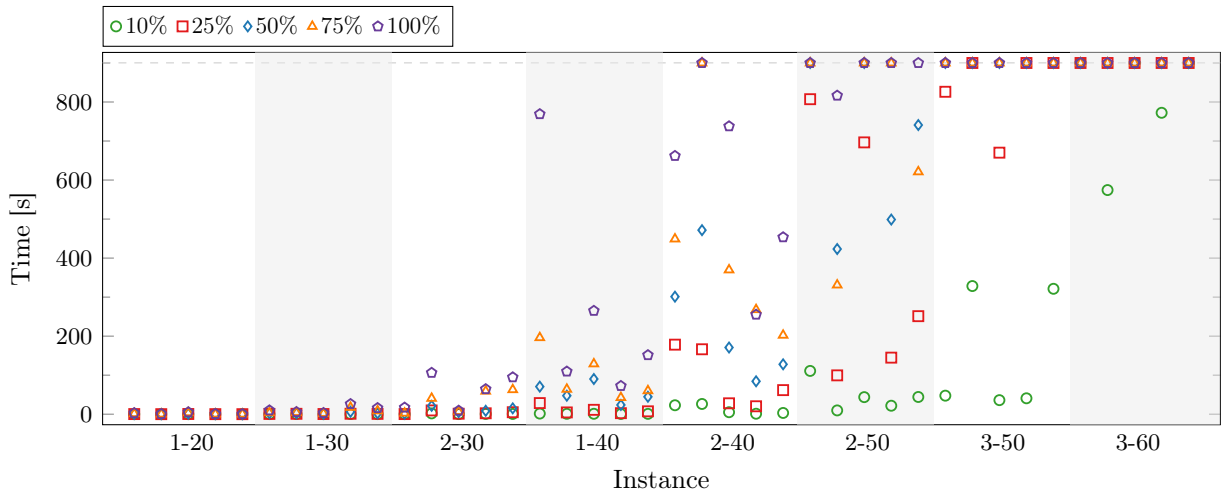

\Cref{fig:positions:runtime} shows the average runtime of the restricted master problem associated with $\Sset'$ for 10\% to 100\% of available positions. For small instances, we observe only minor deviations in runtime. For larger instances, the more positions are available, the longer the runtime. 
For the baseline case with $100\%$ of available positions, we were able to solve only one instance with 50 requests and 2 vehicles, whilst 
for the case where only $10\%$ of the $2\,\sizeR$ positions are available, we were able to solve instances with up to 60 requests. 

Next, we consider the number of positions required to preserve the objective value of the reference solution obtained with 100\% of available positions. 
Across the 62.5\% of instances for which the according restricted master problem associated with $\Sset'$ was successfully solved within the time limit, at most 12.5\% of the $2\,\sizeR$ positions are required; this occurs for two instances with 20 requests (equivalent to 5 positions).
In other words, for the restricted master problem considered here, we have not encountered instances that require a number of positions matching (or being close to) the theoretical lower bound.
On average, only 6.38\% of the positions are required, with a minimum of 2.5\% in an instance with 50 requests.

Our findings suggest that, in practice, a smaller number of available positions may be used to improve the runtime of the restricted master problem without impacting the solution quality, even though $2\,\sizeR$ available positions (and a full branch-and-price) are required to warrant optimality in general.

\paragraph{The Impact of the Symmetry Breaking Constraints}\label{sec:experiments:config:symmconstr}

In the master problem, Constraints~\eqref{eq:master-milp:13} and~\eqref{eq:master-milp:14} are symmetry-breaking constraints, introduced for modeling purposes and to reduce the search space, removing certain symmetric solutions that have the same objective function value. We evaluate their usefulness by considering four different settings: a baseline which is solving the restricted master problem as given in MILP~\eqref{eq:master-milp}, then the two scenarios without either Constraints~\eqref{eq:master-milp:13} or \eqref{eq:master-milp:14}, and lastly one setting removing both sets of symmetry-breaking constraints. 

\begin{figure}[htb!]
    \centering
    \begin{tikzpicture}
    \begin{axis}[
        table/col sep=comma,
        width=.9\linewidth, height=.3\linewidth,
        symbolic x coords={
        1-20-A,1-20-B,1-20-C,1-20-D,1-20-E,
        1-30-A,1-30-B,1-30-C,1-30-D,1-30-E,
        2-30-A,2-30-B,2-30-C,2-30-D,2-30-E,
        1-40-A,1-40-B,1-40-C,1-40-D,1-40-E,
        2-40-A,2-40-B,2-40-C,2-40-D,2-40-E,
        2-50-A,2-50-B,2-50-C,2-50-D,2-50-E},
        cycle list name=scatter-marks-6-plots,
        enlargelimits=0.03,
        clip mode=individual,
        xmin=1-20-A,
        xmax=2-50-E,
        xtick distance=5,
        xticklabels = {, 1-20,1-30,2-30,1-40,2-40,2-50},
        xticklabel style={rotate=0,yshift=0em,xshift=2.5em},
        xtick style={draw=none},
        ymin = 0,
        ymax=900,
        minor y tick num=1,
        xlabel={Instance},
        ylabel={Time [s]},
        legend style={at={(0,1.07)},anchor=west,nodes={scale=0.8, transform shape}, legend columns=6, /tikz/every even column/.append style={column sep=.2em}, align=left},
        ]
        \addplot+[only marks,on layer=m1] table[x=Instance,y=1a_baseline] {data/03_symmetry_breaking/runtime.dat};
        \addlegendentry{Baseline};
        \addplot+[only marks,on layer=m1] table[x=Instance,y=1n_no_single_stop] {data/03_symmetry_breaking/runtime.dat};
        \addlegendentry{w/o \eqref{eq:master-milp:13}};
        \addplot+[only marks,on layer=m1] table[x=Instance,y=1o_no_route_length] {data/03_symmetry_breaking/runtime.dat};
        \addlegendentry{w/o \eqref{eq:master-milp:14}};
        \addplot+[only marks,on layer=m1] table[x=Instance,y=no_symm_breaking] {data/03_symmetry_breaking/runtime.dat};
        \addlegendentry{w/o \eqref{eq:master-milp:13} and  \eqref{eq:master-milp:14}};
        \draw[PKlightgray,dashed]  (rel axis cs: 0,0.972) -- (rel axis cs:1,0.972);
        \filldraw [fill=PKlightgray!40!white,draw opacity=0, fill opacity=0.5, on layer=m2] (rel axis cs:0.175,0) rectangle (rel axis cs:0.338,1);
        \filldraw [fill=PKlightgray!40!white,draw opacity=0, fill opacity=0.5, on layer=m2] (rel axis cs:0.501,0) rectangle (rel axis cs:0.664,1);
        \filldraw [fill=PKlightgray!40!white,draw opacity=0, fill opacity=0.5, on layer=m2] (rel axis cs:0.827,0) rectangle (rel axis cs:1,1);
    \end{axis}
\end{tikzpicture}
    \vspace{-.5em}
    \caption{Runtime of the restricted master problem when removing specific (symmetry-breaking) constraints. The dashed line indicates the timeout at 900 seconds.}
    \label{fig:symmconstr:runtime}
\end{figure}

\Cref{fig:symmconstr:runtime} shows the runtime of the restricted master problem for the four different settings. These values can also be found in~\Cref{tab:symmconstr:runtime} in the Appendix.

The baseline setting is able to solve the most, and the largest, instances with up to 50 requests.
The Constraints~\eqref{eq:master-milp:13} are clearly advantageous, as the setting without these constraints is only able to solve 12 instances with up to 30 requests. Further, the runtime in the setting without Constraints~\eqref{eq:master-milp:13} is repeatedly higher than in the baseline. As for Constraints~\eqref{eq:master-milp:14}, while the runtime fluctuates and is sometimes lower without these constraints than in the baseline setting, the baseline was able to solve two more instances.

This experiment verifies that the chosen modeling restrictions are beneficial to our restricted master problem, effectively reducing the size of the search space. 

\section{Conclusion}\label{sec:conclusion}
This work introduces the \lipdp as a semi-flexible on-demand transportation system, combining the underlying spatial structure of a bus line with temporal flexibility,  and it
proposes an MILP formulation, an exact branch-and-price algorithm, as well as a heuristic variant of the approach to solve the problem.
Before detailing our solution approach for the \lipdp, we study the related \Ppattern problem, where only a single profitable stopping pattern is to be generated. We show that this problem and even its uncapacitated version, the \Ppatternuncap problem, are \NP-hard, and propose MILP formulations to solve both problems. 
Our solution approaches for the \lipdp are based on combining stopping patterns to vehicle tours, which allows an iterative generation of stopping patterns by repeatedly solving the \Ppatternuncap problem.

Computational results show that the branch-and-price algorithm is strong in finding feasible integer solutions and upper bounds early on, solving instances with up to 55 requests in 900 seconds, within a MIP gap of $5\%$. Our proposed root node heuristic is able to solve very large instances, with up to 100 requests, within its time limit of 900 seconds, consistently reaching average optimality gaps below 5\% across various settings and instance parameters.
Comparison with a state-of-the-art approach reveals that on 
small instances, the state-of-the-art approach outperforms our solution approach when it comes to solving the problem to optimality. However, on large instances, where the state-of-the-art approach struggles to find even a feasible solution in a reasonable time, our approach is able to find good solutions comparatively fast.

When it comes to extensions of our work and further experiments, it would be interesting to see whether our approach remains competitive when (very large or soft) time window constraints are introduced.
We believe that demand for semi-flexible passenger transport is likely to increase when these systems can be used spontaneously, i.e., without booking several hours in advance, which would allow for better pooling rates. For the operation of such a system, fast solution approaches to dynamic problem variants are needed. Future work may therefore focus on adapting the proposed approach for the dynamic context. 
The problem to generate stopping patterns is also relevant in the context of schedule-based transport, when designing express lines, or, as an ad-hoc control measure to recover to an existing timetable in case of delays. It would be interesting to see whether our ideas and solution approaches could contribute towards solving these problems.

\paragraph{Acknowledgement} The HPC-Cluster ``Julia 2'' was funded as DFG project as ``Forschungsgroßgerät nach Art 91b GG'' under INST 93/1145-1 FUGG. 

\theendnotes
\bibliography{references}

@inproceedings{reiter_line-based_2024,
	address = {Dagstuhl, Germany},
	series = {Open {Access} {Series} in {Informatics} ({OASIcs})},
	title = {The {Line}-{Based} {Dial}-a-{Ride} {Problem}},
	volume = {123},
	isbn = {978-3-95977-350-8},
	doi = {10.4230/OASIcs.ATMOS.2024.14},
	booktitle = {24th {Symposium} on {Algorithmic} {Approaches} for {Transportation} {Modelling}, {Optimization}, and {Systems} ({ATMOS} 2024)},
	publisher = {Schloss Dagstuhl – Leibniz-Zentrum für Informatik},
	author = {Reiter, Kendra and Schmidt, Marie and Stiglmayr, Michael},
	editor = {Bouman, Paul C. and Kontogiannis, Spyros C.},
	year = {2024},
	note = {ISSN: 2190-6807},
	pages = {14:1--14:20},
}

@Article{gaul25tight,
  author        = {Gaul, Daniela and Klamroth, Kathrin and Pfeiffer, Christian and Stiglmayr, Michael and Schulz, Arne},
  journal       = {European Journal of Operational Research},
  title         = {A Tight Formulation for the Dial-a-Ride Problem},
  year          = {2025},
  issn          = {0377-2217},
  doi           = {10.1016/j.ejor.2024.09.028},
  publisher     = {Elsevier BV},
}

@inproceedings{lauerbach_complexity_2025,
	address = {Cham},
	title = {The {Complexity} of {Counting} {Turns} in the {Line}-{Based} {Dial}-a-{Ride} {Problem}},
	isbn = {978-3-031-82697-9},
	doi = {10.1007/978-3-031-82697-9_7},
	language = {en},
	booktitle = {{SOFSEM} 2025: {Theory} and {Practice} of {Computer} {Science}},
	publisher = {Springer Nature Switzerland},
	author = {Lauerbach, Antonio and Reiter, Kendra and Schmidt, Marie},
	editor = {Královič, Rastislav and Kůrková, Věra},
	year = {2025},
	pages = {85--98},
}

@article{gaul_event-based_2022,
	title = {Event-based {MILP} models for ridepooling applications},
	volume = {301},
	issn = {0377-2217},
	doi = {10.1016/j.ejor.2021.11.053},
	number = {3},
	journal = {European Journal of Operational Research},
	author = {Gaul, Daniela and Klamroth, Kathrin and Stiglmayr, Michael},
	month = sep,
	year = {2022},
	pages = {1048--1063},
}

@inproceedings{gaul_solving_2021,
	title = {Solving the {Dynamic} {Dial}-a-{Ride} {Problem} {Using} a {Rolling}-{Horizon} {Event}-{Based} {Graph}},
	copyright = {https://creativecommons.org/licenses/by/4.0/legalcode},
	doi = {10.4230/OASIcs.ATMOS.2021.8},
	language = {en},
	booktitle = {21st {Symposium} on {Algorithmic} {Approaches} for {Transportation} {Modelling}, {Optimization}, and {Systems} ({ATMOS} 2021)},
	publisher = {Schloss-Dagstuhl - Leibniz Zentrum für Informatik},
	author = {Gaul, Daniela and Klamroth, Kathrin and Stiglmayr, Michael},
	year = {2021},
}

@InProceedings{barth_line-based_2025,
  author =	{Barth, Jonas and Reiter, Kendra and Schmidt, Marie},
  title =	{{The Line-Based Dial-a-Ride Problem with Transfers}},
  booktitle =	{25th Symposium on Algorithmic Approaches for Transportation Modelling, Optimization, and Systems (ATMOS 2025)},
  pages =	{17:1--17:20},
  series =	{Open Access Series in Informatics (OASIcs)},
  ISBN =	{978-3-95977-404-8},
  ISSN =	{2190-6807},
  year =	{2025},
  volume =	{137},
  editor =	{Sauer, Jonas and Schmidt, Marie},
  publisher =	{Schloss Dagstuhl -- Leibniz-Zentrum f{\"u}r Informatik},
  address =	{Dagstuhl, Germany},
  URN =		{urn:nbn:de:0030-drops-247736},
  doi =		{10.4230/OASIcs.ATMOS.2025.17}
}

@article{PAPADIMITRIOU1977237,
title = {The {Euclidean} travelling salesman problem is {NP}-complete},
journal = {Theoretical Computer Science},
volume = {4},
number = {3},
pages = {237-244},
year = {1977},
issn = {0304-3975},
doi = {10.1016/0304-3975(77)90012-3},
author = {Christos H. Papadimitriou},
}

@article{ruland_pickup_1997,
	title = {The pickup and delivery problem: {Faces} and branch-and-cut algorithm},
	volume = {33},
	issn = {0898-1221},
	shorttitle = {The pickup and delivery problem},
	doi = {10.1016/S0898-1221(97)00090-4},
	number = {12},
	journal = {Computers \& Mathematics with Applications},
	author = {Ruland, K. S. and Rodin, E. Y.},
	month = jun,
	year = {1997},
	pages = {1--13},
}

@article{kalantari_algorithm_1985,
	title = {An algorithm for the traveling salesman problem with pickup and delivery customers},
	volume = {22},
	issn = {0377-2217},
	doi = {10.1016/0377-2217(85)90257-7},
	number = {3},
	journal = {European Journal of Operational Research},
	author = {Kalantari, Bahman and Hill, Arthur V. and Arora, Sant R.},
	month = dec,
	year = {1985},
	pages = {377--386},
}

@article{lysgaard_pyramidal_2010,
	title = {The pyramidal capacitated vehicle routing problem},
	volume = {205},
	issn = {0377-2217},
	doi = {10.1016/j.ejor.2009.11.029},
	number = {1},
	journal = {European Journal of Operational Research},
	author = {Lysgaard, Jens},
	month = aug,
	year = {2010},
	pages = {59--64},
}

@techreport{vdv_linien-_2025,
    author = {{Verband Deutscher Verkehrsunternehmen e.V.}},
	type = {{VDV}-{Leitfaden}},
	title = {Linien- und {Bedarfsverkehre} in der {Region}: {Integriert}, datenbasiert, effizient},
	url = {https://www.vdv.de/vdv-mitteilung-10017-linien-und-bedarfsverkehr-leitfaden-on-demand-05-2025.pdfx},
    editor = {Deutsch, Volker},
	number = {10017},
	institution = {Verband Deutscher Verkehrsunternehmen e.V. (VDV)},
	year = {2025},
    month = may,
    series = {VDV-Mitteilung},
}

@techreport{agora_verkehrswende_mobilitatsoffensive_2023,
    author = {Reuter, Christian and Fritz, Christian and Lannefeld, Marvin and Ritschny, Jakub},
	title = {Mobilitätsoffensive für das {Land}. {Wie} {Kommunen} mit flexiblen {Kleinbussen} den
{ÖPNV} von morgen gestalten können.},
	language = {de},
	institution = {Agora Verkehrswende},
	year = {2023},
    number = {92-2023-DE},
    url = {https://www.agora-verkehrswende.de/veroeffentlichungen/mobilitaetsoffensive-fuer-das-land/},
}

@article{cordeau_branch-and-cut_2006,
	title = {A {Branch}-and-{Cut} {Algorithm} for the {Dial}-a-{Ride} {Problem}},
	volume = {54},
	issn = {0030-364X},
	doi = {10.1287/opre.1060.0283},
	number = {3},
	journal = {Operations Research},
	author = {Cordeau, Jean-François},
	month = jun,
	year = {2006},
    publisher = {INFORMS},
	pages = {573--586},
}

@article{ropke_models_2007,
	title = {Models and branch-and-cut algorithms for pickup and delivery problems with time windows},
	volume = {49},
	issn = {1097-0037},
	doi = {10.1002/net.20177},
	language = {en},
	number = {4},
	journal = {Networks},
	author = {Ropke, Stefan and Cordeau, Jean-François and Laporte, Gilbert},
	year = {2007},
	pages = {258--272},
}

@article{mehlert1998angebotsbezeichnungen,
  title={Angebotsbezeichnungen bei alternativen {Bedienungsformen}},
  author={Mehlert, Christian},
  journal={Nahverkehr},
  volume={16},
  pages={56--58},
  year={1998},
  publisher={ALBA FACHVERLAG}
}

@article{archetti_capacitated_2009,
	title = {The capacitated team orienteering and profitable tour problems},
	volume = {60},
	issn = {0160-5682},
	doi = {10.1057/palgrave.jors.2602603},
	number = {6},
	journal = {Journal of the Operational Research Society},
	author = {Archetti, Claudia and Feillet, Dominique and Hertz, Alain and Speranza, Maria Grazia},
	month = jun,
	year = {2009},
	note = {Publisher: Taylor \& Francis},
	pages = {831--842},
}

@article{archetti_optimal_2013,
	series = {Seventh {International} {Conference} on {Graphs} and {Optimization} 2010},
	title = {Optimal solutions for routing problems with profits},
	volume = {161},
	issn = {0166-218X},
	doi = {10.1016/j.dam.2011.12.021},
	number = {4},
	journal = {Discrete Applied Mathematics},
	author = {Archetti, Claudia and Bianchessi, Nicola and Speranza, Maria Grazia},
	month = mar,
	year = {2013},
	pages = {547--557},
}

@book{siefer_handbuch_2023,
	address = {Bonn},
	title = {Handbuch zur {Planung} flexibler {Bedienungsformen} im Ö{PNV}: ein {Beitrag} zur {Sicherung} der {Daseinsvorsorge} in nachfrageschwachen {Räumen}},
	isbn = {978-3-87994-551-1},
	shorttitle = {Handbuch zur {Planung} flexibler {Bedienungsformen} im Ö{PNV}},
	language = {de},
	publisher = {Bundesinstitut für Bau-, Stadt- und Raumforschung im Bundesamt für Bauwesen und Raumordnung},
	author = {Siefer, Thomas and Sievers, Nina and Heemsoth, Jan Peter},
	collaborator = {Kistner, Rafael and Biermanski, Lucas and {Technische Universität Braunschweig} and {Bundesinstitut für Bau-, Stadt- und Raumforschung} and {Deutschland}},
    month = oct,
	year = {2023},
}

@inproceedings{karp_np_complete_72,
  author       = {Richard M. Karp},
  editor       = {Raymond E. Miller and
                  James W. Thatcher},
  title        = {Reducibility Among Combinatorial Problems},
  booktitle    = {Complexity of Computer Computations},
  series       = {The {IBM} Research Symposia Series},
  pages        = {85--103},
  publisher    = {Plenum Press, New York},
  year         = {1972},
  doi          = {10.1007/978-1-4684-2001-2_9},
}

@article{errico_survey_2013,
	title = {A survey on planning semi-flexible transit systems: {Methodological} issues and a unifying framework},
	volume = {36},
	issn = {0968-090X},
	shorttitle = {A survey on planning semi-flexible transit systems},
	doi = {10.1016/j.trc.2013.08.010},
	language = {en},
	journal = {Transportation Research Part C: Emerging Technologies},
	author = {Errico, Fausto and Crainic, Teodor Gabriel and Malucelli, Federico and Nonato, Maddalena},
	month = nov,
	year = {2013},
	pages = {324--338},
}

@article{parragh_survey_2008,
	title = {A survey on pickup and delivery problems},
	volume = {58},
	issn = {1614-631X},
	doi = {10.1007/s11301-008-0036-4},
	language = {en},
	number = {2},
	journal = {Journal für Betriebswirtschaft},
	author = {Parragh, Sophie N. and Doerner, Karl F. and Hartl, Richard F.},
	month = jun,
	year = {2008},
	pages = {81--117},
}

@article{berbeglia_static_2007,
	title = {Static pickup and delivery problems: a classification scheme and survey},
	volume = {15},
	issn = {1863-8279},
	shorttitle = {Static pickup and delivery problems},
	doi = {10.1007/s11750-007-0009-0},
	language = {en},
	number = {1},
	journal = {TOP},
	author = {Berbeglia, Gerardo and Cordeau, Jean-François and Gribkovskaia, Irina and Laporte, Gilbert},
	month = jul,
	year = {2007},
	pages = {1--31},
}

@inproceedings{mallach_refined_2025,
	address = {Dagstuhl, Germany},
	series = {Open {Access} {Series} in {Informatics} ({OASIcs})},
	title = {Refined {Integer} {Programs} and {Polyhedral} {Results} for the {Target} {Visitation} {Problem}},
	volume = {137},
	isbn = {978-3-95977-404-8},
	doi = {10.4230/OASIcs.ATMOS.2025.8},
	booktitle = {25th {Symposium} on {Algorithmic} {Approaches} for {Transportation} {Modelling}, {Optimization}, and {Systems} ({ATMOS} 2025)},
	publisher = {Schloss Dagstuhl – Leibniz-Zentrum für Informatik},
	author = {Mallach, Sven},
	editor = {Sauer, Jonas and Schmidt, Marie},
	year = {2025},
	pages = {8:1--8:17},
}

@article{schobel_eigenmodel_2017,
	title = {An eigenmodel for iterative line planning, timetabling and vehicle scheduling in public transportation},
	volume = {74},
	issn = {0968-090X},
	doi = {10.1016/j.trc.2016.11.018},
	journal = {Transportation Research Part C: Emerging Technologies},
	author = {Schöbel, Anita},
	month = jan,
	year = {2017},
	pages = {348--365},
}

@inproceedings{roth_energy-efficient_2025,
	address = {Dagstuhl, Germany},
	series = {Open {Access} {Series} in {Informatics} ({OASIcs})},
	title = {Energy-{Efficient} {Line} {Planning} by {Implementing} {Express} {Lines}},
	volume = {137},
	isbn = {978-3-95977-404-8},
	doi = {10.4230/OASIcs.ATMOS.2025.18},
	booktitle = {25th {Symposium} on {Algorithmic} {Approaches} for {Transportation} {Modelling}, {Optimization}, and {Systems} ({ATMOS} 2025)},
	publisher = {Schloss Dagstuhl – Leibniz-Zentrum für Informatik},
	author = {Roth, Sarah and Schöbel, Anita},
	editor = {Sauer, Jonas and Schmidt, Marie},
	year = {2025},
	note = {ISSN: 2190-6807},
	pages = {18:1--18:21},
}

@article{borndorfer_column-generation_2007,
	title = {A {Column}-{Generation} {Approach} to {Line} {Planning} in {Public} {Transport}},
	volume = {41},
	issn = {0041-1655},
	doi = {10.1287/trsc.1060.0161},
	number = {1},
	journal = {Transportation Science},
	author = {Borndörfer, Ralf and Grötschel, Martin and Pfetsch, Marc E.},
	month = feb,
	year = {2007},
	note = {Publisher: INFORMS},
	pages = {123--132},
}

@article{gatt_solving_2025,
	title = {Solving the line planning problem with service-levels using a column generation-based heuristic algorithm},
	volume = {14},
	issn = {2192-4376},
	doi = {10.1016/j.ejtl.2025.100164},
	journal = {EURO Journal on Transportation and Logistics},
	author = {Gatt, Hector and Freche, Jean-Marie and Lehuédé, Fabien and Yeung, Thomas G.},
	month = jan,
	year = {2025},
	pages = {100164},
}

@article{cacchiani_column_2008,
	title = {A column generation approach to train timetabling on a corridor},
	volume = {6},
	issn = {1614-2411},
	doi = {10.1007/s10288-007-0037-5},
	language = {en},
	number = {2},
	journal = {4OR},
	author = {Cacchiani, Valentina and Caprara, Alberto and Toth, Paolo},
	month = jun,
	year = {2008},
	pages = {125--142},
}

@article{gkiotsalitis_subline_2022,
	title = {Subline frequency setting for autonomous minibusses under demand uncertainty},
	volume = {135},
	issn = {0968-090X},
	doi = {10.1016/j.trc.2021.103492},
	language = {en},
	journal = {Transportation Research Part C: Emerging Technologies},
	author = {Gkiotsalitis, Konstantinos and Schmidt, Marie and van der Hurk, Evelien},
	month = feb,
	year = {2022},
	pages = {103492},
}

@article{liu_optimizing_2023,
	title = {Optimizing public transport transfers by integrating timetable coordination and vehicle scheduling},
	volume = {184},
	issn = {0360-8352},
	doi = {10.1016/j.cie.2023.109577},
	journal = {Computers \& Industrial Engineering},
	author = {Liu, Tao and Ji, Wen and Gkiotsalitis, Konstantinos and Cats, Oded},
	year = {2023},
	pages = {109577},
}

@article{HERNANDEZPEREZ2009,
author = {Hipólito Hernández-Pérez and Juan-José Salazar-González},
title = {The multi-commodity one-to-one pickup-and-delivery traveling salesman problem},
journal = {European Journal of Operational Research},
volume = {196},
number = {3},
pages = {987--995},
year = {2009},
issn = {0377-2217},
doi = {10.1016/j.ejor.2008.05.009}
}

@article{LETCHFORDSG2016,
author = {Adam N. Letchford and Juan-José Salazar-González},
title = {Stronger multi-commodity flow formulations of the (capacitated) sequential ordering problem},
journal = {European Journal of Operational Research},
volume = {251},
number = {1},
pages = {74--84},
year = {2016},
issn = {0377-2217},
doi = {10.1016/j.ejor.2015.11.001}
}

@ARTICLE{Ascheuer2000,
	author = {Ascheuer, Norbert and Jünger, Michael and Reinelt, Gerhard},
	title = {A Branch \& cut algorithm for the asymmetric traveling salesman problem with precedence constraints},
	year = {2000},
	journal = {Computational Optimization and Applications},
	volume = {17},
	number = {1},
	pages = {61--84},
	doi = {10.1023/A:1008779125567}
}

@InProceedings{MaxRPSP2014,
author="Beerenwinkel, Niko
and Beretta, Stefano
and Bonizzoni, Paola
and Dondi, Riccardo
and Pirola, Yuri",
editor="Dediu, Adrian-Horia
and Mart{\'i}n-Vide, Carlos
and Sierra-Rodr{\'i}guez, Jos{\'e}-Luis
and Truthe, Bianca",
title="Covering Pairs in Directed Acyclic Graphs",
booktitle="Language and Automata Theory and Applications",
year="2014",
publisher="Springer International Publishing",
address="Cham",
pages="126--137",
isbn="978-3-319-04921-2",
    doi = {10.1007/978-3-319-04921-2_10}
}

@article{Hildenbrandt2019,
author = {Achim Hildenbrandt},
title = {A branch-and-cut algorithm for the target visitation problem},
journal = {EURO Journal on Computational Optimization},
volume = {7},
number = {3},
pages = {209--242},
year = {2019},
issn = {2192-4406},
doi = {10.1007/s13675-019-00111-x}
}

@article{cordeau_dial--ride_2007,
	title = {The dial-a-ride problem: models and algorithms},
	volume = {153},
	issn = {1572-9338},
	shorttitle = {The dial-a-ride problem},
	doi = {10.1007/s10479-007-0170-8},
	language = {en},
	number = {1},
	journal = {Annals of Operations Research},
	author = {Cordeau, Jean-François and Laporte, Gilbert},
	month = sep,
	year = {2007},
	pages = {29--46},
}

@article{molenbruch_typology_2017,
	title = {Typology and literature review for dial-a-ride problems},
	volume = {259},
	issn = {1572-9338},
	doi = {10.1007/s10479-017-2525-0},
	language = {en},
	number = {1},
	journal = {Annals of Operations Research},
	author = {Molenbruch, Yves and Braekers, Kris and Caris, An},
	month = dec,
	year = {2017},
	pages = {295--325},
}

@article{ho_survey_2018,
	title = {A survey of dial-a-ride problems: {Literature} review and recent developments},
	volume = {111},
	issn = {0191-2615},
	shorttitle = {A survey of dial-a-ride problems},
	doi = {10.1016/j.trb.2018.02.001},
	language = {en},
	urldate = {2023-04-14},
	journal = {Transportation Research Part B: Methodological},
	author = {Ho, Sin C. and Szeto, W. Y. and Kuo, Yong-Hong and Leung, Janny M. Y. and Petering, Matthew and Tou, Terence W. H.},
	month = may,
	year = {2018},
	pages = {395--421},
}

\newpage

\section*{Tables}
\begin{table}[H]
    \centering
    \caption{Summary of Parameters.}
    \begin{tabularx}{\textwidth}{ll}
        \toprule
        Notation & Definition \\
        \midrule
        $\Hset$ & sequence of $\sizeH$ stations \\
        $\Kset$ & set of $\sizeK$ homogeneous vehicles \\
        $\Pset$ & set of $\sizeP$ positions per vehicle \\
        $\Pasc$ & the set of positions in ascending direction \\
        $\Pdesc$ & the set of positions in descending directions \\
        $\Sset$ & set of $\sizeS$ stopping patterns\\
        $\Jset$ & index set of $\Sset, \set{1, \ldots, \sizeS}$\\
        $\Rset$ & set of $\sizeR$ requests \\
        $\Rasc$ & ascending requests\\
        $\Rdesc$ & descending requests\\
        $\Gset$ & set of overlapping requests \\
        $\Gdir$ & set of all overlapping request sets $\Gset$ in a direction $\dir$\\
        $\dir$ & direction, either $\asc$ (ascending) or $\desc$ (descending) \\
        $\Qmax$ & vehicle capacity\\
        $t_{h,h'} \geq 0$ & distance between $h,h' \in \Hset$ \\
        $o_r, d_r \in \Hset$ & origin and destination of a request $r \in \Rset$\\
        $l_j$ & length of $s_j$\\
        $\Left{h}{j}$ & 1 if $h$ is the lowest stop in $s_j$, else 0\\
        $\Right{h}{j}$ & 1 if $h$ is the highest stop in $s_j$, else 0\\
        $s_j(h)$ & 1 if $h$ is included in $s_j$, else 0\\
        $\iota_r$ & reward of $r$\\
        $\wpax, \wdist$ & objective function weights (transported requests, saved distance)\\
        \bottomrule
    \end{tabularx}
    \label{tab:params}
\end{table}

\newpage

\begin{table}[H]
    \centering
    \caption{Best incumbent and best bound (given as incumbent / bound) found after a given time. A dash indicates no solution was found. The best values found per instance are emboldened. Rows where no values were found by either approach are omitted.}\footnotesize
    \begin{tabular}{@{}l cccc cccc@{}}
        \toprule
        Inst. & \multicolumn{4}{c}{ALAEB} & \multicolumn{4}{c}{Branch-and-Price}\\\cmidrule(lr){1-1}\cmidrule(lr){2-5}\cmidrule(lr){6-9}
        T [s] & 300 & 900 & 1800 & 3600 & 300 & 900 & 1800 & 3600 \\
        \midrule
        30-A & \textbf{361}/362.00 & \textbf{361}/361.65 & \textbf{361}/\textbf{361.00} & \textbf{361}/\textbf{361.00} & 359/366.20 & 359/365.83 & 360/364.84 & 360/364.41\\
        30-B & \textbf{359}/362.52 & \textbf{359}/362.52 & \textbf{359}/362.48 & \textbf{359}/\textbf{362.25} & 358/366.49 & 358/366.49 & 358/366.49 & \textbf{359}/366.49\\
        30-C & 346/350.34 & 348/350.34 & \textbf{349}/350.33 & \textbf{349}/\textbf{349.08} & 347/353.55 & 348/352.00 & 348/352.00 & 348/351.02\\
        30-D & 358/360.09 & \textbf{359}/359.88 & \textbf{359}/359.81 & \textbf{359}/\textbf{359.49} & 357/364.56 & 357/364.00 & 357/362.16 & 357/361.21\\
        30-E & 364/367.00 & \textbf{365}/367.00 & \textbf{365}/366.94 & \textbf{365}/\textbf{366.83} & 364/371.71 & \textbf{365}/371.71 & \textbf{365}/371.33 & \textbf{365}/370.35\\
        \rowcolor{PKlightgray!50}35-A & 410/\textbf{413.27} & 411/\textbf{413.27} & 411/\textbf{413.27} & \textbf{412}/\textbf{413.27} & 407/419.00 & 409/417.32 & 409/417.26 & 409/415.71\\
        \rowcolor{PKlightgray!50}35-B & 10/435.69 & 95/435.68 & \textbf{433}/435.68 & \textbf{433}/\textbf{435.52} & 430/447.00 & 431/444.60 & 431/444.00 & 431/444.00\\
        \rowcolor{PKlightgray!50}35-C & \textbf{405}/407.17 & \textbf{405}/407.17 & \textbf{405}/407.17 & \textbf{405}/\textbf{406.67} & 401/412.00 & 401/412.00 & 401/412.00 & 403/411.46\\
        \rowcolor{PKlightgray!50}35-D & \textbf{416}/\textbf{417.77} & \textbf{416}/\textbf{417.77} & \textbf{416}/\textbf{417.77} & \textbf{416}/\textbf{417.77} & 413/423.50 & 413/423.50 & 414/423.50 & 415/423.00\\
        \rowcolor{PKlightgray!50}35-E & \textbf{404}/404.67 & \textbf{404}/404.58 & \textbf{404}/404.55 & \textbf{404}/\textbf{404.33} & 403/407.00 & 403/407.00 & 403/407.00 & 403/407.00\\
        40-A & -/- & 0/\textbf{468.48} & 463/\textbf{468.48} & \textbf{468}/\textbf{468.48} & -/- & 463/477.00 & 463/477.00 & 463/477.00\\
        40-B & 464/\textbf{484.48} & \textbf{481}/\textbf{484.48} & \textbf{481}/\textbf{484.48} & \textbf{481}/\textbf{484.48} & 476/493.00 & 477/493.00 & 477/493.00 & 478/492.33\\
        40-C & 191/\textbf{474.68} & \textbf{473}/\textbf{474.68} & \textbf{473}/\textbf{474.68} & \textbf{473}/\textbf{474.68} & -/- & 471/483.00 & 471/483.00 & 471/483.00\\
        40-D & 453/\textbf{455.10} & \textbf{454}/\textbf{455.10} & \textbf{454}/\textbf{455.10} & \textbf{454}/\textbf{455.10} & 450/460.00 & 451/460.00 & 452/459.46 & 452/459.46\\
        40-E & 473/477.62 & \textbf{475}/\textbf{477.58} & \textbf{475}/\textbf{477.58} & \textbf{475}/\textbf{477.58} & 471/486.00 & 471/486.00 & 471/486.00 & 471/486.00\\
        \rowcolor{PKlightgray!50}45-A & -/- & 0/\textbf{537.14} & 0/\textbf{537.14} & 0/\textbf{537.14} & 530/547.00 & 532/547.00 & 532/546.31 & \textbf{533}/546.31\\
        \rowcolor{PKlightgray!50}45-B & -/- & 364/\textbf{552.01} & 506/\textbf{552.01} & 549/\textbf{552.01} & 547/564.00 & 547/564.00 & \textbf{548}/562.10 & \textbf{548}/562.10\\
        \rowcolor{PKlightgray!50}45-C & -/- & -/- & 0/\textbf{540.31} & 0/\textbf{540.31} & -/- & 534/552.00 & \textbf{535}/552.00 & \textbf{535}/550.40\\
        \rowcolor{PKlightgray!50}45-D & -/- & -/- & -/- & 0/\textbf{550.64 }& 540/563.00 & 546/563.00 & 546/563.00 & \textbf{547}/563.00\\
        \rowcolor{PKlightgray!50}45-E & \textbf{528}/530.07 & \textbf{528}/530.07 &\textbf{ }528/\textbf{530.04} & \textbf{528}/\textbf{530.04} & 526/538.00 & \textbf{528}/538.00 & \textbf{528}/537.20 & 528/537.20\\
        50-A & -/- & -/- & -/- & -/- & -/- & 600/\textbf{627.00} & 601/\textbf{627.00} & \textbf{602}/\textbf{627.00}\\
        50-B & -/- & 158/\textbf{593.54} & 188/\textbf{593.54} & 310/\textbf{593.54} & 584/604.00 & 584/604.00 & 584/604.00 & \textbf{587}/604.00\\
        50-C & -/- & -/- & 292/\textbf{602.19} & 292/\textbf{602.19} & -/- & \textbf{593}/616.00 & \textbf{593}/616.00 & \textbf{593}/616.00\\
        50-D & 395/\textbf{589.00} & 581/\textbf{589.00} & 581/\textbf{589.00} & 581/\textbf{589.00} & 581/599.00 & 584/599.00 & \textbf{585}/597.92 & \textbf{585}/597.92\\
        50-E & -/- & -/- & -/- & 158/\textbf{602.16} & -/- & \textbf{596}/614.00 & \textbf{596}/613.35 & \textbf{596}/613.35\\
        \rowcolor{PKlightgray!50}55-A & -/- & 215/\textbf{652.22} & 215/\textbf{652.22} & 277/\textbf{652.22} & -/- & \textbf{642}/666.00 & \textbf{642}/666.00 & \textbf{642}/666.00\\
        \rowcolor{PKlightgray!50}55-B & -/- & -/- & -/- & -/- & -/- & 660/\textbf{686.00} & 660/\textbf{686.00} & \textbf{661}/\textbf{686.00}\\
        \rowcolor{PKlightgray!50}55-C & -/- & -/- & -/- & -/- & -/- & \textbf{669}/\textbf{699.00} & \textbf{669}/\textbf{699.00} & \textbf{669}/\textbf{699.00}\\
        \rowcolor{PKlightgray!50}55-D & -/- & -/- & -/- & -/- & -/- & -/- & -/- & \textbf{655}/\textbf{681.00}\\
        \rowcolor{PKlightgray!50}55-E & -/- & 0/\textbf{646.18} & 0/\textbf{646.18} & 0/\textbf{646.18} & -/- & \textbf{637}/659.00 & \textbf{637}/659.00 & \textbf{637}/659.00\\
        60-C & -/- & -/- & -/- & -/- & -/- & -/- & \textbf{704}/\textbf{731.00} & \textbf{704}/\textbf{731.00}\\
        60-E & -/- & -/- & -/- & -/- & -/- & -/- & -/- & \textbf{717}/\textbf{746.00}\\
        \bottomrule
    \end{tabular}
    \label{tab:models:ub}
\end{table}

\newpage
\begin{table}[H]
    \centering
    \caption{Runtime in seconds of the restricted master problem when removing specific symmetry-breaking constraints. A dash indicates no solution was found within the time limit of 900 seconds. Rows where no solution was found in all columns are omitted.}
    \label{tab:symmconstr:runtime}
    \footnotesize
    \begin{tabularx}{.6\textwidth}{XSSSS}
        \toprule
        {Instance} & {Baseline} & {w/o \eqref{eq:master-milp:13}} & {w/o \eqref{eq:master-milp:14}} & {w/o \eqref{eq:master-milp:13} and \eqref{eq:master-milp:14}} \\
        \midrule
        {1-20-A} & 2.87 & 3.28 & 2.87 & 3.06\\
        {1-20-B} & 0.50 & 0.38 & 0.50 & 0.39\\
        {1-20-C} & 5.06 & 61.95 & 5.11 & 58.28\\
        {1-20-D} & 0.58 & 0.47 & 0.62 & 0.43\\
        {1-20-E} & 0.13 & 0.08 & 0.13 & 0.08\\
        \rowcolor{PKlightgray!50} {1-30-A} & 9.87 & 207.26 & 8.54 & 199.42\\
        \rowcolor{PKlightgray!50} {1-30-B} & 5.52 & 629.04 & 5.76 & 641.44\\
        \rowcolor{PKlightgray!50} {1-30-C} & 3.05 & 7.83 & 3.00 & 7.67\\
        \rowcolor{PKlightgray!50} {1-30-D} & 26.03 & 395.00 & 29.50 & 381.05\\
        \rowcolor{PKlightgray!50} {1-30-E} & 15.76 & 305.78 & 15.74 & 301.54\\
        {2-30-A} & 16.96 & 7.40 & 17.33 & 20.54\\
        {2-30-B} & 106.06 & {-} & 111.36 & {-}\\
        {2-30-C} & 9.02 & 118.22 & 7.06 & 30.17\\
        {2-30-D} & 64.24 & {-} & 120.78 & {-}\\
        {2-30-E} & 94.59 & {-} & 192.49 & 618.53\\
        \rowcolor{PKlightgray!50} {1-40-A} & 768.96 & {-} & 609.14 & {-}\\
        \rowcolor{PKlightgray!50} {1-40-B} & 109.35 & {-} & 107.19 & {-}\\
        \rowcolor{PKlightgray!50} {1-40-C} & 264.91 & {-} & 249.94 & {-}\\
        \rowcolor{PKlightgray!50} {1-40-D} & 72.35 & {-} & 70.38 & {-}\\
        \rowcolor{PKlightgray!50} {1-40-E} & 151.35 & {-} & 145.86 & {-}\\
        {2-40-A} & 661.88 & {-} & {-} & {-}\\
        {2-40-C} & 737.84 & {-} & 543.66 & {-}\\
        {2-40-D} & 255.13 & {-} & 284.55 & {-}\\
        {2-40-E} & 453.44 & {-} & 184.86 & {-}\\
        \rowcolor{PKlightgray!50} {2-50-B} & 816.37 & {-} & {-} & {-}\\
        \bottomrule
    \end{tabularx}
\end{table}

\newpage
\appendix
\section{Relaxed Restricted Master Problem}\label{sec:rmp}
For any set $\Jset'\subseteq \Jset$, the relaxed restricted master problem (RRMP($\Jset'$)) is given by
\begin{maxi!}[3]<b>
{}{\sum_{k \in \Kset} \Biggl( \sum_{\substack{\dir \in\\ \set{\asc, \desc}}} \sum_{p \in \Pdir} \sum_{r \in \Rdir} \left(\wpax + \wdist \cdot t_{o_r, d_r} \right) x_{p,k}^{r}  - \wdist \cdot d_k \Biggr)}{\protect\label{eq:rrmp}}{}
\addConstraint{\sum_{k \in \Kset} \sum_{p \in \Pdir} x_{p,k}^{r}}{\leq 1}{\quad \forall r \in \Rdir, \dirs}{\quad(\alpha_r)}{\protect\label{eq:rrmp:1-A}}
\addConstraint{\sum_{r \in \Gset} x_{p,k}^{r}}{\leq \Qmax}{\quad \forall p \in \Pdir, k \in \Kset, \Gset \in \Gdir, \dirs}{\quad(\phi_{p,k}^g)}{\protect\label{eq:rrmp:2-A}}
\addConstraint{\Sstart{h}{p,k} - \Send{h}{p-1,k}}{= 0}{\quad \forall p \in \Pset \setminus \set{1}, h \in \Hset, k \in \Kset}{\quad(\gamma_{p,k}^g)}{\protect\label{eq:rrmp:3-A}}
\addConstraint{\sum_{h \in \Hset} \Sstart{h}{p,k}}{= 1}{\quad \forall p \in \Pset, k \in \Kset}{\quad(\delta_{p,k})}{\protect\label{eq:rrmp:4-A}}
\addConstraint{\sum_{h \in \Hset} \Send{h}{p,k}}{= 1}{\quad \forall p \in \Pset, k \in \Kset}{\quad(\epsilon_{p,k})}{\protect\label{eq:rrmp:5-A}}
\addConstraint{\sum_{j \in \Jset'} y_{p,k}^j}{= 1}{\quad \forall p \in \Pset, k \in \Kset}{\quad(\zeta_{p,k})}{\protect\label{eq:rrmp:6-A}}
\addConstraint{x_{p,k}^{r} - \sum_{j \in \Jset'} {y_{p,k}^j \cdot s_j(o_r) \cdot s_j(d_r)}}{\leq 0}{\quad \forall r \in \Rdir, p \in \Pdir, k \in \Kset, \dirs }{\quad(\iota_{p,k}^r)}{\protect\label{eq:rrmp:7-A}}
\addConstraint{\sum_{j \in \Jset'} y_{p,k}^j \cdot \Left{h}{j} - \Sstart{h}{p,k}}{\leq 0}{\quad \forall p \in \Pasc, k \in \Kset, h \in \Hset}{\quad(\lambda_{p,k}^{h})}{\protect\label{eq:rrmp:8-A}}
\addConstraint{\sum_{j \in \Jset'} y_{p,k}^j \cdot \Right{h}{j} - \Sstart{h}{p,k}}{\leq 0}{\quad \forall p \in \Pdesc, k \in \Kset, h \in \Hset}{\quad(\lambda_{p,k}^{h})}{\protect\label{eq:rrmp:9-A}}
\addConstraint{\sum_{j \in \Jset'} y_{p,k}^j \cdot \Right{h}{j} - \Send{h}{p,k}}{\leq 0}{\quad \forall p \in \Pasc, k \in \Kset, h \in \Hset}{\quad(\mu_{p,k}^{h})}{\protect\label{eq:rrmp:10-A}}
\addConstraint{\sum_{j \in \Jset'} y_{p,k}^j \cdot \Left{h}{j} - \Send{h}{p,k}}{\leq 0}{\quad \forall p \in \Pdesc, k \in \Kset, h \in \Hset}{\quad(\mu_{p,k}^{h})}{\protect\label{eq:rrmp:11-A}}
\addConstraint{\sum_{p \in \Pset} \sum_{j \in \Jset'} l_j \cdot y_{p,k}^j - d_k}{\leq 0}{\quad \forall k \in \Kset}{\quad (\nu_k)}{\protect\label{eq:rrmp:12-A}}
\addConstraint{y_{p-2, k}^j + y_{p-1,k}^j - y_{p,k}^j}{\leq 1}{\quad\forall k \in \Kset, p \in \Pset\setminus \set{1, 2}, j \in \Jset' : l_j = 0}{\quad(\psi_{p,k}^j) }{\protect\label{eq:rrmp:13-A}}
\displaybreak
\addConstraint{{d_{k+1} - d_k}}{{\leq 0}}{\quad {\forall k \in \Kset \setminus \set{\sizeK}}}{\quad(\vartheta_{k})}{\protect\label{eq:rrmp:14-A}}
\addConstraint{d_k}{\geq 0}{\quad \forall k \in \Kset}{\protect\label{eq:rrmp:19-A}}
\addConstraint{x_{p,k}^{r}}{\geq 0}{\quad \forall r \in \Rdir, p \in \Pdir, k \in \Kset, \dirs}
\addConstraint{y_{p,k}^j}{\geq 0}{\quad \forall j \in \Jset', p \in \Pset, k \in \Kset}
\addConstraint{\Sstart{h}{p,k}}{\geq 0}{\quad \forall p \in \Pset, k \in \Kset, h \in \Hset}
\addConstraint{\Send{h}{p,k}}{\geq 0}{\quad \forall p \in \Pset, k \in \Kset, h \in \Hset}
\end{maxi!}
Note that there are no constraints needed to restrict the individual variables to be smaller or equal than one due to Constraints~\eqref{eq:rrmp:1-A}, \eqref{eq:rrmp:6-A}, \eqref{eq:rrmp:4-A}, and~\eqref{eq:rrmp:5-A} in conjunction with the nonnegativity constraints. For $\Jset'=\Jset$, we obtain the LP-relaxation (LP) of the (unrestricted) master problem.

The variables in parentheses behind each constraint correspond to the dual variables in (DP).

\newpage
\section{Dual Problem to the Relaxed Restricted Master Problem}\label{sec:dp}
For any set $\Jset' \subseteq \Jset$, the dual (DP($\Jset'$)) to (RRMP($\Jset'$)) is given by:

\begin{mini!}[3]<b>
{}{\sum_{r \in \Rasc} \alpha_r^{\asc} + \sum_{r \in \Rdesc} \alpha_r^{\desc}
    + \sum_{p \in \Pset}\sum_{k \in \Kset} \left(\delta_{p,k} + \epsilon_{p,k} + \zeta_{p,k} \right)
    + \sum_{p \in \Pset} \sum_{k \in \Kset} \sum_{\substack{j \in \Jset: \\ l_j = 0}} \psi_{p,k}^j}{\protect\label{eq:dp}}{}
    \breakObjective{+ \Qmax \sum_{g \in \Gasc} \sum_{p \in \Pasc} \sum_{k \in \Kset} \phi_{p,k}^{\asc, g} + \Qmax \sum_{g \in \Gdesc} \sum_{p \in \Pdesc} \sum_{k \in \Kset} \phi_{p,k}^{\desc, g} }\nonumber
\addConstraint{
    \alpha_r^{\dir} + \iota_{p,k}^r + \sum_{\substack{g \in \Gdir: \\ r \in g}}\phi_{p,k}^g}{\geq \wpax + \wdist \cdot t_{o_r, d_r}  \qquad \forall r \in \Rdir, p \in \Pdir, k \in \Kset, \dirs \protect\label{eq:dp:1}}
\addConstraint{\zeta_{p,k} - \sum_{r \in \Rasc} s_j(o_r) \cdot s_j(d_r) \cdot \iota_{p,k}^{r}}{\protect\label{eq:dp:2}}
\addConstraint{ + \sum_{h \in \Hset} \left( \Left{h}{j} \cdot \lambda_{p,k}^{h} + \Right{h}{j} \cdot \mu_{p,k}^{h} \right)+ l_j\nu_k}{\geq 0 \qquad\qquad\qquad \forall j \in \Jset: l_j > 0, p \in \Pasc, k \in \Kset}\nonumber
\addConstraint{\zeta_{p,k} - \sum_{r \in \Rdesc} s_j(o_r) \cdot s_j(d_r) \cdot \iota_{p,k}^{r}}{\protect\label{eq:dp:3}}
\addConstraint{+ \sum_{h \in \Hset} \left( \Right{h}{j} \cdot \lambda_{p,k}^{h} + \Left{h}{j} \cdot \mu_{p,k}^{h} \right) + l_j\nu_k}{\geq 0 \qquad\qquad\qquad \forall j \in \Jset : l_j > 0, p \in \Pdesc, k \in \Kset}\nonumber
\addConstraint{\zeta_{p,k} - \sum_{r \in \Rasc} s_j(o_r) \cdot s_j(d_r) \cdot \iota_{p,k}^{r} - \psi_{p,k}^j + \psi_{p+1,k}^j}{\protect\label{eq:dp:4}}
\addConstraint{+ \psi_{p+2,k}^j + \sum_{h \in \Hset} \left( \Left{h}{j} \cdot \lambda_{p,k}^{h} + \Right{h}{j} \cdot \mu_{p,k}^{h} \right) + l_j\nu_k}{\geq 0 \qquad\forall j \in \Jset: l_j = 0, p \in \Pasc \setminus \set{\sizeP - 1}, k \in \Kset}\nonumber
\addConstraint{\zeta_{p,k} - \sum_{r \in \Rdesc} s_j(o_r) \cdot s_j(d_r) \cdot \iota_{p,k}^{r} - \psi_{p,k}^j + \psi_{p+1,k}^j}{\protect\label{eq:dp:5}}
\addConstraint{+ \psi_{p+2,k}^j + \sum_{h \in \Hset} \left( \Right{h}{j} \cdot \lambda_{p,k}^{h} + \Left{h}{j} \cdot \mu_{p,k}^{h} \right) + l_j\nu_k}{\geq 0\qquad \forall j \in \Jset: l_j = 0, p \in \Pdesc \setminus \set{\sizeP}, k \in \Kset}\nonumber
\addConstraint{\gamma_{p,k}^h - \lambda_{p,k}^{h} + \delta_{p,k}}{\geq 0 \hspace{14.5em} \forall h \in \Hset, p \in \Pset \setminus \set{1}, k \in \Kset \protect\label{eq:dp:6}}
\addConstraint{- \lambda_{1,k}^{h} + \delta_{1,k}}{\geq 0 \hspace{16em} \forall h \in \Hset, k \in \Kset \protect\label{eq:dp:7}}
\addConstraint{-\gamma_{p+1,k}^h - \mu_{p,k}^{h} + \epsilon_{p,k}}{\geq 0 \hspace{12.2em} \forall h \in \Hset, p \in \Pset \setminus \set{1, \sizeP}, k \in \Kset \protect\label{eq:dp:8}}
\addConstraint{-\mu_{\sizeP,k}^{h} + \epsilon_{\sizeP,k}}{\geq 0 \hspace{16em} \forall h \in \Hset, k \in \Kset \protect\label{eq:dp:9}}
\addConstraint{-\mu_{1,k}^{h} + \epsilon_{1,k}}{\geq 0 \hspace{16em} \forall h \in \Hset, k \in \Kset \protect\label{eq:dp:10}}
\addConstraint{-\nu_k + {\vartheta_{k-1} - \vartheta_k}}{\geq -\wdist \hspace{11.5em} \forall k \in \Kset \setminus \set{1} \protect\label{eq:dp:11}}
\addConstraint{-\nu_1 - {\vartheta_1}}{\geq -\wdist\protect\label{eq:dp:12}}
\addConstraint{- \sum_{p \in \Pset}\sum_{k \in \Kset} \xi_{p,k}^{j}}{\geq 0 \hspace{16em} \forall j \in \Jset \protect\label{eq:dp:13}}
\addConstraint{\alpha_r}{\geq 0\hspace{21.2em}\forall r \in R \protect\label{eq:dp:14}}
\addConstraint{\beta_{p,k}^r, \iota_{p,k}^r}{\geq 0 \hspace{18.8em} \forall r \in R, p \in \Pset, k \in \Kset \protect\label{eq:dp:15}}
\addConstraint{\upsilon_{p,k}}{\geq 0 \hspace{20.8em} \forall p \in \Pset, k \in \Kset \protect\label{eq:dp:17}}
\addConstraint{\nu_k, \vartheta_k}{\geq 0 \hspace{20em} \forall k \in \Kset \protect\label{eq:dp:18}}
\addConstraint{\lambda_{p,k}^{h}, \mu_{p,k}^{h}}{\geq 0 \hspace{18.9em} \forall p \in \Pset, k \in \Kset, h \in \Hset \protect\label{eq:dp:19}}
\addConstraint{\phi_{p,k}^g}{\geq 0 \hspace{21em} \forall g \in \Gset, \Gset \in \Gdir,\dir, p \in \Pset, k \in \Kset \protect\label{eq:dp:20}}
\addConstraint{\psi_{p,k}^j}{\geq 0 \hspace{21em} \forall p \in \Pset\setminus \set{\sizeP}, k \in \Kset, j \in \Jset: l_j = 0 \protect\label{eq:dp:21}}
\end{mini!}
\noindent where the variables $\gamma_{p,k}^h, \delta_{p,k}, \epsilon_{p,k}, \zeta_{p,k}$ are unrestricted.
For $\Jset'=\Jset$ we obtain the dual of the unrestricted relaxed master problem.

\newpage
\section{Complexity of the \lipdp}\label{sec:appendix:complexity}
The decision version of the \lipdp is the following:
\defdecproblem{\lipdp}
{An instance $\Iset$ with stations $\Hset$ with distances $t$, requests $\Rset$, and vehicles $\Kset$ of capacity $\Qmax$, objective weights $\wpax$ and $\wdist$, an integer $k$}
{Does there exist a solution (a set of tours and set of accepted requests) with objective value at least $k$?}

We prove \Cref{thm:complexity:lipdp} by a reduction from the open traveling salesperson problem, which has been shown to be \NP-complete for Euclidean distances in \citep{PAPADIMITRIOU1977237}, where it is called the \textsc{path-TSP}.

\defdecproblem{\textsc{path-TSP}}
{A complete graph $G$ with Euclidean weights $w$, an integer $k$}
{Does $G$ admit a Hamiltonian path of length at most $k$?}

\begin{proof}
    A candidate solution to the \lipdp consists of a tour (sequence of stations) for each vehicle, whose travel costs can be verified in polynomial time, and a set of accepted requests.
    Together, the objective function value can be verified in polynomial time by summing the travel costs and request rewards. Thus, \lipdp is in \NP.

    Let $(G,k)$ be an instance of the \textsc{path-TSP}, where $G$ has $n$ nodes and weights $w_{i,j}\ \forall i,j \in \set{1, \ldots, n}$. 
    We construct an instance of the \lipdp as follows: a set of $n$ stations $\Hset = (1, \ldots, n)$ with distances $t_{i,j} = w_{i,j}\ \forall i,j \in \set{1, \ldots, n}$, a single vehicle of capacity $1$, and a set of $n$ unique requests $\Rset = \set{r_1, \ldots, r_n}$ where request $r_i$ has both its origin and destination at $h_i \in \Hset$. We set the objective function weights as $\wpax = \sum_{i,j = 1}^n{t_{i,j}}$ and $\wdist = 1$ and $k' = \wpax \cdot n - \wdist \cdot k$. Note that here $t_{o_r, d_r} = 0$ for all $r \in \Rset$. This construction can be executed in polynomial time.

    Suppose there exists a tour of length at most $k$ to the \textsc{path-TSP} on the graph $G$. The corresponding solution to the \lipdp corresponds to visiting (e.g., picking up and dropping off) each request exactly in the order specified by the \textsc{path-TSP} solution. Since this is a Hamiltonian path, every request is served. Then, the objective function value is given by
    \[\wpax \cdot n + \wdist \cdot (0 - k) =  \wpax \cdot n - \wdist \cdot k = k'. \]
    Thus there exists a solution to the \lipdp with objective value at least $k'$.

    Now, consider the \lipdp solution with the maximal objective value~$k^*$ and suppose that~$k^* \geq k'$.

    First, we argue that this solution visits all $n$ requests. If not, by 
    visiting an additional (previously unvisited) request $m$, the objective value changes by $\wpax - \wdist \cdot (t_{i,m} + t_{m,j} - t_{i,j})$, if this visit occurs between visiting requests $i$ and $j$, or by $\wpax - \wdist \cdot t_{i,m} = \wpax - \wdist \cdot t_{m,i}$ if the visit is inserted at the beginning or end of the tour. 
    By our choice of $\wpax$ and $\wdist$, we have
    \[ \wpax - \wdist \cdot (t_{i,m} + t_{m,j} - t_{i,j}) = \sum_{i,j = 1}^n{t_{i,j}} - (t_{i,m} + t_{m,j} - t_{i,j}) > 0. \]
    Then $k^*$ would not be maximal, which is a contradiction. So any optimal solution must serve all requests. 
    
    Second, we argue that this solution visits each station in $\Hset$ exactly once. If not, it must visit at least one station more than once. Then, only one of these visits corresponds to the pick-up and drop-off of the associated request (due to the fixed capacity of $1$). Since all distances respect the triangle inequality, omitting the additional visits to this station cannot worsen the solution (i.e., lengthen the vehicle tour). Hence there exists a solution of equivalent (or better) objective value which visits each station in $\Hset$ exactly once.
    
    Then, the \lipdp tour is w.l.o.g. a Hamiltonian path. Let~$t^*$ be the tour length, then it holds that~$k^*=\wpax\cdot n-\wdist\cdot t^*$ and therefore that~$t^*=\wpax\cdot n-k^*\leq k$ as~$k^*\geq k'$.

    Since the \lipdp graph corresponds exactly to the graph $G$, the \lipdp path is a Hamilton path in $G$ whose length is given by~$t^* \leq k$.

    Hence, the decision version of \lipdp is \NP-complete.
\end{proof}

\newpage
\section{The Impact of the Number of Stopping Patters}\label{sec:appendix:stopping_patterns}
In this experiment, we randomly generated a fixed a number of stopping patterns $\Sset'$, then solved the restricted master problem using $\Sset'$ with a time limit of 900 seconds. \Cref{fig:appendix:random_patterns:runtime} shows the runtime in seconds for different number of stopping patterns, with five samples per instance size. 

We observe that, at most, instances with 50 requests could be solved, where only the configuration with 100 stopping patterns was able to solve instances with over 40 requests. 

\begin{figure}[htb!]
    \centering
    \begin{tikzpicture}
\begin{axis}[%
    width=.85\linewidth, height=.3\linewidth,
    boxplot/draw direction=y,
    enlargelimits=0.05,
    xticklabels = {, , 100, 200, 300, 400, 500, 600, 700, 800, 900},
    xtick distance=1,
    xtick style={draw=none},
    ymin = 0,
    ymax=900,
    minor y tick num=1,
    xlabel={Number of Stopping Patterns},
    ylabel={Time [s]},
  ]
    \draw[PKlightgray,dashed]  (0,900) -- (10,900);

    \addplot+[thick,black,solid,boxplot prepared={draw position= 1,lower whisker=0.85497694, lower quartile=17.08630557, median=69.53702819, upper quartile=330.586874, upper whisker=870.4865971}] coordinates {};

    \addplot+[thick,black,solid,boxplot prepared={draw position= 2,lower whisker=0.633046436, lower quartile=18.68223946, median=151.487152, upper quartile=468.7938596, upper whisker=900}] coordinates {};
    
    \addplot+[thick,black,solid,boxplot prepared={draw position= 3,lower whisker=1.103090429, lower quartile=41.37236159, median=256.2953628, upper quartile=485.7702877, upper whisker=900}] coordinates {};

    \addplot+[thick,black,solid,boxplot prepared={draw position= 4,lower whisker=0.694281816, lower quartile=54.32140098, median=311.5625868, upper quartile=758.5873454, upper whisker=900}] coordinates {};

    \addplot+[thick,black,solid,boxplot prepared={draw position= 5,lower whisker=0.631036997, lower quartile=63.47300638, median=440.0936068, upper quartile=900, upper whisker=900}] coordinates {};

    \addplot+[thick,black,solid,boxplot prepared={draw position= 6,lower whisker=1.378111219, lower quartile=83.18446575, median=427.1953805, upper quartile=900, upper whisker=900}] coordinates {};

    \addplot+[thick,black,solid,boxplot prepared={draw position= 7,lower whisker=0.528181267, lower quartile=93.9648852, median=433.6791934, upper quartile=900, upper whisker=900}] coordinates {};

    \addplot+[thick,black,solid,boxplot prepared={draw position= 8,lower whisker=0.987705231, lower quartile=102.7539408, median=517.3767024, upper quartile=900, upper whisker=900}] coordinates {};

    \addplot+[thick,black,solid,boxplot prepared={draw position= 9,lower whisker=1.069824076, lower quartile=116.1661391, median=459.108209, upper quartile=900, upper whisker=900}] coordinates {};        
\end{axis}
\end{tikzpicture}
    \vspace{-.5em}
    \caption{Runtime of the restricted master problem for different number of randomly generated stopping patterns. The dashed line denotes the timeout at 900 seconds.}
    \label{fig:appendix:random_patterns:runtime}
\end{figure}
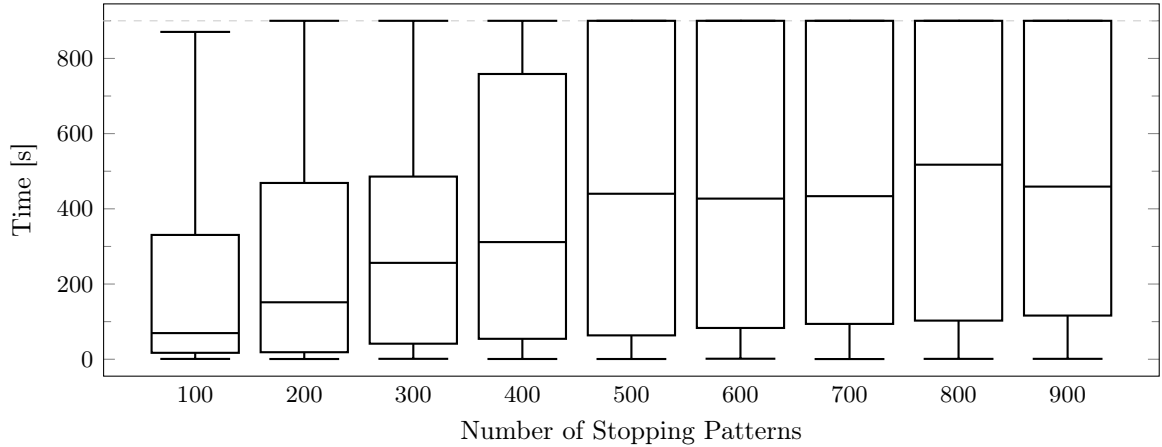
\end{document}